\newtheorem{theorem}{Theorem}
\newtheorem{proposition}[theorem]{Proposition}
\newtheorem{lemma}[theorem]{Lemma}
\theoremstyle{definition}
\numberwithin{equation}{section}
\numberwithin{figure}{section}
\numberwithin{theorem}{section}
\newcommand{\R}{\mathbb{R}}
\newcommand{\E}{\mathbb{E}}
\renewcommand{\P}{\mathbb{P}}
\newcommand{\F}{\mathcal{F}}
\newcommand{\A}{\mathcal{A}}
\newcommand{\Rd}{\mathbb{R}^d}
\newcommand{\ep}{\varepsilon}
\DeclareMathOperator{\USC}{USC}
\DeclareMathOperator{\BUC}{BUC}
\renewcommand{\tilde}{\widetilde}
\begin{document}

\title[Homogenization of a nonconvex Hamilton-Jacobi equation]{Stochastic homogenization of a nonconvex Hamilton-Jacobi equation}

\begin{abstract}
We present a proof of qualitative stochastic homogenization for a nonconvex Hamilton-Jacobi equation. The new idea is to introduce a family of ``sub-equations" and to control solutions of the original equation by the maximal subsolutions of the latter, which have deterministic limits by  the subadditive ergodic theorem and maximality. 
\end{abstract}

\author[S. N. Armstrong]{Scott N. Armstrong}
\address{Ceremade (UMR CNRS 7534), Universit\'e Paris-Dauphine, Paris, France}
\email{armstrong@ceremade.dauphine.fr}

\author[H. V. Tran]{Hung V. Tran}
\address{Department of Mathematics\\
The University of Chicago\\ 5734 S. University Avenue Chicago, Illinois 60637, USA}
\email{hung@math.uchicago.edu}

\author[Y. Yu]{Yifeng Yu}
\address{Department of Mathematics\\
University of California at Irvine, California 92697, USA}
\email{yyu1@math.uci.edu}

\keywords{stochastic homogenization, nonconvex Hamilton-Jacobi equation, metric problem}
\subjclass[2010]{35B27}
\date{\today}

\maketitle

\section{Introduction}

\subsection{Motivation and overview}
We study the Hamilton-Jacobi equation
\begin{equation} \label{e.pde}
u^\ep_t + \left( \left| Du^\ep\right|^2 - 1 \right)^2 - V\!\left( \frac x\ep  \right) = 0 \quad \mbox{in} \ \Rd \times (0,\infty), \quad d \geq 1.
\end{equation}
The potential $V$ is assumed to be a bounded, stationary--ergodic random potential. We prove that, in the limit as the length scale $\ep > 0$ of the correlations tends to zero, the solution $u^\ep$ of~\eqref{e.pde}, subject to an appropriate initial condition, converges to the solution $u$ of the effective, deterministic equation
\begin{equation} \label{e.pdehom}
u_t + \overline H(Du) = 0 \quad \mbox{in} \ \Rd\times (0,\infty). 
\end{equation}
The effective Hamiltonian $\overline H$ is, in general, a non-radial, nonconvex function whose graph inherits the basic ``mexican hat" shape of that of the spatially independent Hamiltonian $p\mapsto (|p|^2-1)^2$. As we will show, it typically has two ``flat spots" (regions in which it is constant), with one a neighborhood of the origin and at which $\overline H$ attains a local maximum and the other a neighborhood of $\{ |p| = 1\}$ and at which $\overline H$ attains its global minimum. See Figure~\ref{fig.effham1}.

\smallskip

Qualitative stochastic homogenization results for convex Hamilton-Jacobi equations were first obtained independently by~Rezakhanlou and Tarver~\cite{RT} and~Souganidis~\cite{S} and subsequent qualitative results were obtained by Lions and Souganidis~\cite{LS1,LS2,LS3}, Kosygina, Rezakhanlou and Varadhan~\cite{KRV}, Kosygina and Varadhan~\cite{KV}, Schwab~\cite{Sch}, Armstrong and Souganidis~\cite{AS1,AS2} and Armstrong and Tran~\cite{AT1}. Quantitative homogenization results were proved in Armstrong, Cardaliaguet and Souganidis~\cite{ACS} (see also Matic and Nolen~\cite{MN}).

\smallskip

In contrast to the periodic setting, in which nonconvex Hamiltonians are not more difficult to handle than convex Hamiltonians (c.f.~\cite{E2,LPV}), extending the results of~\cite{RT,S} to the nonconvex case has remained, until now, completely open (except for the quite modest extension to \emph{level-set convex} Hamiltonians~\cite{AS2} and the forthcoming work~\cite{CST}, which considers a first-order motion with a sign-changing velocity). The issue of whether convexity is necessary for homogenization in the random setting is mentioned prominently as an open problem for example in~\cite{Kosy,LS2,LS3}. As far as we know, in this paper we present the first stochastic homogenization result for a genuinely non-convex Hamilton-Jacobi equation.

\smallskip

A new proof of qualitative homogenization for convex Hamilton-Jacobi equations in random environments was introduced in~\cite{AS2}, based on comparison arguments which demonstrate that \emph{maximal subsolutions} of the equation (also called solutions of the \emph{metric problem}) control solutions of the \emph{approximate cell problem}. This new argument is applicable to merely level-set convex Hamiltonians and lead to the quantitative results of~\cite{AS2}, among other developments. Several of the comparison arguments we make in the proofs of Lemmas~\ref{l.below.+1}--\ref{l.above-hilltop}, below, which are at the core of the argument for our main result, rely on some of the ideas introduced in~\cite{AS2}. The metric problem was also used to obtain dynamical information in Davini and Siconolfi~\cite{DS1,DS2} and as the basis of numerical schemes for computing~$\overline H$ in Oberman,~Takei and Vladimirsky~\cite{OTV} and Luo, Yu and Zhao~\cite{LYZ}.

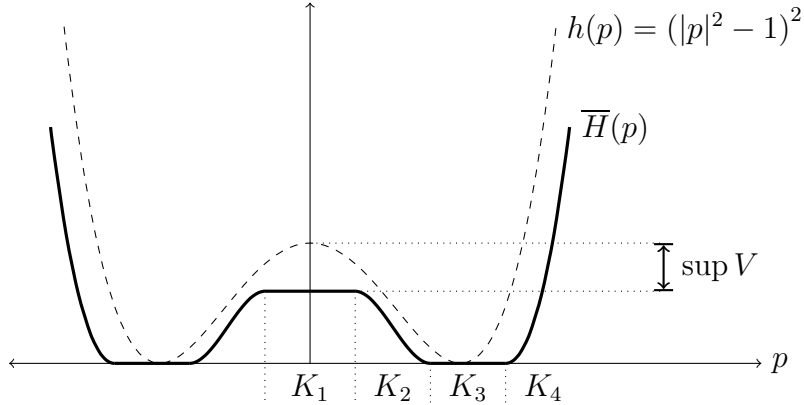
\begin{figure}
\label{fig.effham1}
\centering
\begin{tikzpicture}[yscale=1.6,xscale = 2]
    \draw[<->] (-2,0) -- (3,0) node[right] {$p$};
    \draw[->] (0,0) -- (0,3) node[above] {};
    \draw[black,dashed] plot [smooth] coordinates {(-1.635,2.8) (-1.6,2.434) (-1.52,1.717) (-1.44,1.153) (-1.36,0.722) (-1.28,0.408) (-1.2,0.194) (-1.12,0.065)(-1.04,0.007) (-1,0) (-0.96,0.006) (-0.88,0.051) (-0.8,0.13) (-0.72,0.232)(-0.64,0.349)(-0.56,0.471)(-0.48,0.592)(-0.4,0.706)(-0.32,0.806)(-0.24,0.888)(-0.16,0.949)(-0.08,0.987)(0,1)(0.08,0.987)(0.16,0.949)(0.24,0.888)(0.32,0.806)(0.4,0.706)(0.48,0.592)(0.56,0.471)(0.64,0.349)(0.72,0.232)(0.8,0.13)(0.88,0.051)(0.96,0.006)(1,0)(1.04,0.007)(1.12,0.065)(1.2,0.194)(1.28,0.408)(1.36,0.722)(1.44,1.153)(1.52,1.717)(1.6,2.434) (1.635,2.8)}node[right] {$h(p)=\left( |p|^2-1 \right)^2$};
    \draw[black, very thick] (-0.3,0.6)--(0.3,0.6);
        \draw[black, very thick] (-0.8,0)--(-1.3,0);
           \draw[black, very thick] (0.8,0)--(1.3,0);
    \draw[black, very thick] plot [smooth] coordinates { (0.3,0.6)
(0.31,0.599)(0.34,0.589)(0.38,0.559)(0.42,0.513)(0.48,0.423)(0.55,0.3)(0.6,0.211)(0.64,0.145)(0.68,0.087)(0.71,0.052)(0.74,0.024)(0.77,0.006)(0.79,0.001)(0.8,0)  } ;
    \draw[black, very thick] plot [smooth] coordinates { (-0.3,0.6)
(-0.31,0.599)(-0.34,0.589)(-0.38,0.559)(-0.42,0.513)(-0.48,0.423)(-0.55,0.3)(-0.6,0.211)(-0.64,0.145)(-0.68,0.087)(-0.71,0.052)(-0.74,0.024)(-0.77,0.006)(-0.79,0.001)(-0.8,0)  } ;
\draw[black,very thick] plot [smooth] coordinates { (1.3,0) (1.31,0.001)
(1.33,0.008) (1.36,0.033) (1.4,0.094) (1.46,0.247) (1.54,0.577) (1.62,1.063) (1.7,1.722) (1.725,1.965)} node[right]{$\overline H(p)$};
\draw[black,very thick] plot [smooth] coordinates { (-1.3,0) (-1.31,0.001)
(-1.33,0.008) (-1.36,0.033) (-1.4,0.094) (-1.46,0.247) (-1.54,0.577) (-1.62,1.063) (-1.7,1.722) (-1.725,1.965)};
\draw[dotted] (-0.3,0.6)--(-0.3,-0.35);
\draw[dotted] (0.3,0.6)--(0.3,-0.35);
\draw[dotted] (0.8,0)--(0.8,-0.35);
\draw[dotted] (1.3,0)--(1.3,-0.35);
\draw (0,0) node[below] 
      {$K_1$};
\draw (0.55,0) node[below] 
      {$K_2$};
\draw (1.05,0) node[below] 
      {$K_3$};
 \draw (1.55,0) node[below] 
      {$K_4$};
 \draw[dotted] (0,1) -- (2.35,1);     
  \draw[dotted] (0,0.6) -- (2.35,0.6);     
  \draw[thick,|<->|] (2.35,1)--(2.35,0.6);
  \draw (2.4,0.8) node[right] {$\sup V$};
    \end{tikzpicture}
    \caption{A cross section of the graph of $\overline H$, illustrated in the case $\inf V = 0$ and $\sup V = \frac25$. The difference $h(0) - (\overline H(0) - \inf V)$ is precisely $\max\{ 1, \sup V\}$. The regions $K_i$ are defined below in~\eqref{e.Ki}. While $\overline H$ is even, it is not radial, in general, unless for example the law of $V$ is invariant under rotations.}
\end{figure}

\smallskip

The proof of our main result is based on comparison arguments, in which we control the solution $v^\delta$ of the approximate cell problem
\begin{equation} \label{e.approxcellprob}
\delta v^\delta + \left( \left| p+Dv^\delta \right|^2 - 1 \right)^2 - V(y) = 0 \quad \mbox{in} \ \Rd
\end{equation}
by the maximal subsolutions of the following family of ``sub-equations"
\begin{equation} \label{e.subequation}
\left|Du\right|^2 = 1 + \sigma \sqrt{\mu + V(y) },
\end{equation}
where the real parameters $\mu$ and $\sigma$ range over $-\inf V \leq \mu < \infty$ and $\sigma \in [-1,1]$. Notice that~\eqref{e.subequation} for $\sigma = \pm 1$ can be formally derived from the \emph{metric problem} associated to~\eqref{e.pde}, which is roughly
\begin{equation} \label{e.metricproblem}
\left( \left| Du \right|^2 - 1 \right)^2 - V(y) = \mu,
\end{equation}
by taking the square root of~\eqref{e.metricproblem}. As it turns out that we must consider~\eqref{e.subequation} also for $-1<\sigma<1$, in order to ``connect" the branches of the square root function. The key insight is that, while the solutions of both~\eqref{e.subequation} and~\eqref{e.metricproblem} have a subadditive structure and thus deterministic limits by the ergodic theorem, there is more information contained in the former than the latter. Indeed, as we show, there is just enough information in~\eqref{e.subequation} to allow us to deduce that~\eqref{e.pde} homogenizes. 

\smallskip

The method we introduce here is applicable to somewhat more general nonconvex equations than~\eqref{e.pde} and, in particular, applies to any equation of the form
\begin{equation} \label{e.moregen}
u_t^\ep + \Phi\!\left(K\!\left(Du^\ep\right) \right) - V\left( \frac x\ep \right) = 0,
\end{equation}
where $\Phi:\R \to \R$ is continuous, $\Phi(s) \rightarrow +\infty$ as $s\to \infty$, the function $K:\Rd \to \R$ is convex and $K(p) \rightarrow +\infty$ as $|p| \to \infty$. In dimension $d=1$, the geometry allows us to take $K(p) =p$ (even though this is not coercive) and we therefore get a general result for any coercive Hamiltonian which is the sum of a deterministic, coercive energy profile and a random potential. Our arguments do not require the dependence of the equation on the gradient variable to be radial or even. The reason we focus on~\eqref{e.pde} rather than~\eqref{e.moregen} is because all of the major difficulties are encountered in the analysis of the former, and that of the latter leads to more complicated notation and bookkeeping issues which distract from the main points. Since~\eqref{e.moregen} is far from a complete class of equations,  the problem of homogenizing general coercive, possibly nonconvex Hamilton-Jacobi equations remains open.

\subsection{Precise statement of the main result}
The random potential is modeled by a probability measure on the set of all potentials. To make this precise, we take~
\begin{equation*} \label{}
\Omega:=\BUC(\Rd)
\end{equation*}
to be the space of real-valued, bounded and uniformly continuous functions on~$\Rd$. We define~$\F$ to be the~$\sigma$-algebra on~$\Omega$ generated by pointwise evaluations, that is
\begin{equation*} \label{}
\F := \, \mbox{$\sigma$--algebra generated by the family of maps} \quad \left\{ V \mapsto V(x) \,:\,  x\in \Rd \right\}. 
\end{equation*}
The translation group action of $\Rd$ on $\Omega$ is denoted by $\{ T_y\}_{y\in \Rd}$, that is, $T_y:\Omega \to \Omega$ is defined by
\begin{equation*} \label{}
\left( T_y V\right)(x) := V(x+y). 
\end{equation*}
We consider a probability measure $\P$ on $(\Omega,\F)$ satisfying the following properties: there exists $K_0>0$ such that 
\begin{equation} \label{e.pub}
\P \left[ \sup_{x\in \Rd} \left| V(x) \right| \leq K_0\right] =1  \quad \mbox{(uniform boundedness),}
\end{equation}
for every $E \in \F$ and $y\in \Rd$,
\begin{equation} \label{e.pstat}
\P \left[ E \right] = \P \left[ T_yE \right] \quad \mbox{(stationarity)}
\end{equation}
and
\begin{equation} \label{e.perg}
\P \big[ \cap_{z\in \Rd} T_zE \big]  \in \{ 0,1 \} \quad \mbox{(ergodicity).}
\end{equation}

We now present the main result. Recall that, for each~$\ep > 0 $ and~$g\in \BUC(\Rd)$, there exists a unique solution~$u^\ep(\cdot,g)\in C(\Rd \times[0,\infty))$ of~\eqref{e.pde} in~$\Rd \times (0,\infty)$, subject to the initial condition $u^\ep(x,0,g) = g(x)$. All differential equations and inequalities in this paper are to be interpreted in the viscosity sense (see~\cite{EBook}). 

\begin{theorem}
\label{t.main}
Assume~$\P$ is a probability measure on~$(\Omega,\F)$ satisfying~\eqref{e.pub},~\eqref{e.pstat} and~\eqref{e.perg}. Then there exists $\overline H \in C(\Rd)$ satisfying
\begin{equation} \label{e.Hbarcoer}
\overline H(p) \rightarrow +\infty \quad \mbox{as} \ |p| \to \infty
\end{equation}
such that, if we denote, for each $g\in \BUC(\Rd)$, the unique solution of~\eqref{e.pdehom} subject to the initial condition $u(x,0) = g(x)$ by $u(x,t,g)$, then
\begin{equation*} \label{}
\P \left[ \forall g\in \BUC(\Rd), \ \forall k>0, \ \limsup_{\ep \to 0} \sup_{(x,t) \in B_{k} \times [0,k]} \left| u^\ep(x,t,g) - u(x,t,g) \right| = 0 \right] = 1. 
\end{equation*}
\end{theorem}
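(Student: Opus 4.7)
The plan is to follow the standard framework for qualitative stochastic homogenization---existence of a deterministic limit for the approximate cell problem, followed by the perturbed test function method---but replacing the unavailable convex machinery by the novel comparison principle hinted at in the introduction. First I would introduce the approximate corrector $v^\delta=v^\delta(\cdot,p,\omega)$ as the unique bounded viscosity solution of~\eqref{e.approxcellprob}; standard Perron/comparison arguments give $\|\delta v^\delta\|_\infty \le C(1+|p|^4)$ together with local Lipschitz-in-$y$ bounds depending only on $|p|$ and $K_0$. The candidate effective Hamiltonian is then
\[
\overline H(p) := -\lim_{\delta\to 0} \delta v^\delta(0,p,\omega),
\]
and the main task is to show that this limit exists, is $\omega$-independent, and defines a continuous coercive function of $p$.

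Next I would analyze the sub-equations~\eqref{e.subequation}. For each admissible pair $(\mu,\sigma)$ and each $y\in\Rd$, let $m_{\mu,\sigma}(x,y,\omega)$ denote the maximal subsolution of~\eqref{e.subequation} in $\Rd$ that vanishes at $y$; because the supremum of any family of subsolutions of an eikonal-type equation is again a subsolution, $m_{\mu,\sigma}$ is well-defined, finite, Lipschitz, and enjoys the subadditive inequality
\[
m_{\mu,\sigma}(x,z,\omega) \le m_{\mu,\sigma}(x,y,\omega) + m_{\mu,\sigma}(y,z,\omega).
\]
Stationarity of $V$ transfers to $m_{\mu,\sigma}$, so the Kingman subadditive ergodic theorem yields a.s.\ deterministic limits $\overline m_{\mu,\sigma}(q) := \lim_{t\to\infty} t^{-1} m_{\mu,\sigma}(tq,0,\omega)$ for each $q\in\Rd$, these being one-homogeneous and convex in $q$. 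Continuity of $\overline m_{\mu,\sigma}$ in the parameters, and in particular its limiting behavior as $\mu\downarrow -\inf V$ (where the two $\sigma=\pm 1$ branches meet), will be needed.

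The heart of the argument is the comparison between $v^\delta$ and the family $\{m_{\mu,\sigma}\}$, which is what the four lemmas cited in the introduction achieve. For each of the regimes corresponding to the regions $K_1,\dots,K_4$ in Figure~\ref{fig.effham1}, I would pick an appropriate value of $\mu$ (so that formally $\mu \approx -\delta v^\delta$) together with an appropriate branch $\sigma\in\{-1,+1\}$; for $p$ near $|p|=1$, where the two branches of the square root of~\eqref{e.metricproblem} must be glued, intermediate values $\sigma\in(-1,1)$ are used. Using $m_{\mu,\sigma}$ to construct test functions against $v^\delta$---and exploiting that any classical subsolution of~\eqref{e.subequation} is, after squaring, a subsolution of $(|Du|^2-1)^2 - V \le \mu$---I would derive matching upper and lower bounds
\[
\bigl|\delta v^\delta(0,p,\omega) + \overline H(p)\bigr| \to 0 \quad \text{as } \delta \to 0, \quad \P\text{-a.s.},
\]
where $\overline H(p)$ is expressed via a Legendre-type formula in the $\overline m_{\mu,\sigma}$. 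I expect the main obstacle to lie precisely here: proving that, even though each individual sub-equation sees only a portion of the original Hamiltonian, the full family collectively contains enough information to control $\delta v^\delta(0)$ from both sides, especially near the phase transitions at $|p|=1$ and at the boundary of the hilltop region $K_1$.

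Once this pointwise limit exists on a $p$-dependent full-measure event, I would deduce continuity of $\overline H$ in $p$ from the uniform Lipschitz-in-$p$ estimate for $\delta v^\delta$ (a direct consequence of the coercivity of $(|q|^2-1)^2 - K_0$), and coercivity from the same. A countable-$p$ argument and a density argument in $g\in\BUC(\Rd)$ consolidate the events into a single full-measure set; then a standard application of the perturbed test function method of Evans, using $\overline H \in C(\Rd)$ and the locally uniform convergence of $\delta v^\delta(\cdot/\delta\ep)$ to the linear function $-\overline H(p)$, yields the full statement of Theorem~\ref{t.main}. I do not anticipate novel difficulties in this last step beyond the routine measurability bookkeeping required to arrange all estimates on a single $\Omega$-set of probability one.
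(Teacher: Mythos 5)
Your outline follows the same architecture as the paper: approximate correctors $v^\delta$ for \eqref{e.approxcellprob}, maximal subsolutions $m_{\mu,\sigma}$ of \eqref{e.subequation} homogenized by the subadditive ergodic theorem, a two-sided comparison to identify $\lim_\delta -\delta v^\delta(0,p)$, and the perturbed test function method to conclude. But the proposal stops exactly where the paper's actual work begins. The matching upper and lower bounds on $-\delta v^\delta(0,p)$ are only announced ("I expect the main obstacle to lie precisely here") and never carried out, whereas they constitute essentially all of Section~3: five separate comparison lemmas (Lemmas~\ref{l.below.+1}--\ref{l.above-hilltop}), one for each regime, plus the structural result Proposition~\ref{p.partition}, which shows that $K_1,\dots,K_4$ partition $\Rd$ and that each $p\notin K_1$ selects a \emph{unique} level $\mu$ with $p\in\partial\overline m_{\mu,\sigma}(\partial B_1)$. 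That uniqueness, which rests on the continuity and strict monotonicity of $(\mu,\sigma)\mapsto \overline m_{\mu,\sigma}$ (Lemma~\ref{l.contmono}, in particular \eqref{e.mono-strict}), is what makes the upper bound of Lemma~\ref{l.above} meet the lower bounds of Lemmas~\ref{l.below.+1}, \ref{l.below.-1} and \ref{l.below-valley} at the same value; your "Legendre-type formula" gloss does not supply it. Relatedly, defining $\overline H(p):=-\lim_\delta \delta v^\delta(0,p)$ leaves you nothing to compare against: the paper must first \emph{construct} $\overline H$ from $\overline m_{\mu,\pm1}$ (via $\overline H^{\,\pm}$) and only then prove Proposition~\ref{p.cell}; continuity and coercivity of $\overline H$ come from that explicit representation, not from the limit whose existence is the thing to be proved.

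One concrete mechanism you invoke is also incorrect as stated: a subsolution of \eqref{e.subequation} is \emph{not}, "after squaring," a subsolution of $(|Du|^2-1)^2 - V \le \mu$; the inequality can flip when $|Du|^2-1$ and $\sigma\sqrt{\mu+V(y)}$ have opposite signs (e.g.\ $\sigma=+1$ and $|Du|$ small). The paper obtains the sub/supersolution properties of $\pm m_{\mu,\sigma}(\cdot,z)$ for the quartic equation \eqref{e.cousin} only away from the vertex $z$, by combining the subsolution property with the Perron supersolution property \eqref{e.supersub} coming from maximality, and globally only when $\sigma\le 0$ \eqref{e.super.global}. This asymmetry dictates the nontrivial devices in the comparison arguments: in Lemma~\ref{l.below.+1} the vertex must be placed at $-se/\delta$, outside the comparison region $U\subseteq B_{s/\delta}$, while Lemma~\ref{l.below.-1} can anchor at the origin; and the hilltop upper bound (Lemma~\ref{l.above-hilltop}) requires the separate construction $\tilde m(y)=-|y-y_\theta|-p\cdot y$ anchored at a point where $V$ is nearly maximal, with a direct check of the viscosity inequality at the vertex. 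None of these ingredients, nor the cutoff construction producing the bounded domains $U$ on which the comparison principle is applied, appear in the proposal; so while the strategy is the right one, the core of the proof is missing.
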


Some qualitative properties of $\overline H$, including the confirmation that its basic shape resembles that of Figure~\ref{fig.effham1}, are presented in Section~\ref{ss.Hbar}. 

\subsection{Outline of the paper}
In Section~\ref{ss.subeq}, we introduce the maximal subsolutions of~\eqref{e.subequation}, study their relationship to~\eqref{e.metricproblem} and show that they homogenize. We construct $\overline H$ in Section~\ref{ss.Hbar} and study some of its qualitative features. The proof of~Theorem~\ref{t.main} is the focus of~Section~\ref{s.homog}, where we compare the maximal subsolutions of~\eqref{e.subequation} to the solutions of~\eqref{e.approxcellprob}.

\section{Identification of the effective Hamiltonian}

Following the metric problem approach to homogenization introduced in~\cite{AS2}, one is motivated to consider, for~$\mu \in \R$, maximal subsolutions of the  equation
\begin{equation} \label{e.naiveMP}
\left( \left| D u \right|^2 - 1 \right)^2 - V( y ) = \mu \quad \mbox{in} \ \Rd.
\end{equation}
Unfortunately, unlike the convex setting, it turns out (as is well-known) that the maximal subsolutions of~\eqref{e.naiveMP} do not encode enough information to identify~$\overline H$, much less prove homogenization. This is not surprising since, by the subadditive nature of the maximal subsolutions, if they could identify~$\overline H$ then the latter would necessarily be convex. Instead, we consider maximal subsolutions of the ``sub-equation"
\begin{equation} \label{e.subpde}
\left|Du\right|^2 = 1 + \sigma \sqrt{\mu + V(y) }\quad \mbox{in} \ \Rd,
\end{equation}
with we take the parameters~$\mu \geq -\inf_{\Rd} V$ and~$\sigma\in [-1,1]$. The idea is that we can control solutions of~\eqref{e.pde} by the maximal subsolutions of~\eqref{e.subpde}, varying the parameters~$\mu$ and~$\sigma$ in an appropriate way. Observe that we may formally derive~\eqref{e.subpde} with~$\sigma=\pm1$ from~\eqref{e.naiveMP} by taking the square root of the equation. 

\smallskip

\subsection{The maximal subsolutions of~\eqref{e.subpde}}
\label{ss.subeq}
We define the maximal subsolutions of~\eqref{e.subpde} and review their deterministic properties. Throughout this subsection we suppose for convenience that
\begin{equation} \label{e.infVzero}
\inf_{\Rd} V = 0.
\end{equation}
For every $\mu \geq 0$, $-1 \leq \sigma \leq 1$ and $z\in \Rd$, we define
\begin{equation} \label{e.mmudef}
m_{\mu,\sigma}(y,z) := \sup\left\{ u(y) - u(z) \,:\, u \in \USC(\Rd) \ \mbox{is a subsolution of~\eqref{e.subpde}}  \right\}.
\end{equation}
Clearly this definition is void if~\eqref{e.subpde} possesses no subsolutions, which occurs if and only if the right-hand side is not nonnegative, that is, if and only if
\begin{equation*} \label{}
\sigma\left(  \mu + \sup_{\Rd} V \right)^{1/2} < -1.
\end{equation*}
In this case, we simply take $m_{\mu,\sigma} \equiv -\infty$. Otherwise, we note that $m_{\mu,\sigma} \geq 0$. 

\smallskip

In the next proposition, we summarize some basic properties of~$m_{\mu,\sigma}$ and relate it to the equation
\begin{equation} \label{e.cousin}
\left( \left| Dw \right|^2 - 1 \right)^2 = \sigma^2 \left( \mu+V(y)\right). 
\end{equation}
Note that~\eqref{e.cousin} is the same as~\eqref{e.naiveMP} in the case that $\sigma \in \{ -1, 1\}$.

\begin{proposition}
\label{p.mmurho}
Fix $V \in \Omega$ satisfying~\eqref{e.infVzero}, $\mu \geq 0$ and $\sigma \in [-1,1]$ such that 
\begin{equation} \label{e.admin}
\sigma \left(  \mu + \sup_{\Rd} V \right)^{1/2} \geq -1.
\end{equation}
For every $y,z\in \Rd$, 
\begin{equation} \label{e.mmusym}
m_{\mu,\sigma}(y,z) = m_{\mu,\sigma}(z,y).
\end{equation}
For every $x,y,z\in \Rd$,
\begin{equation} \label{e.subadd}
m_{\mu,\sigma}(y,z) \leq m_{\mu,\sigma}(y,x) + m_{\mu,\sigma}(x,z).
\end{equation}
For every $z\in \Rd$, $m_\mu(\cdot,z) \in C^{0,1}(\Rd)$ and 
\begin{equation} \label{e.sub}
-m_{\mu,\sigma}(\cdot,z) \quad \mbox{is a subsolution of~\eqref{e.cousin} in}\quad \Rd \setminus \{ z \},
\end{equation}
\begin{equation} \label{e.super}
m_{\mu,\sigma}(\cdot,z) \quad \mbox{is a supersolution of~\eqref{e.cousin} in}\quad \Rd \setminus \{ z \}
\end{equation}
and, moreover,
\begin{equation} \label{e.super.global}
\mbox{if} \ \sigma \leq 0, \   \mbox{then} \quad  m_{\mu,\sigma}(\cdot,z) \quad \mbox{is a supersolution of~\eqref{e.cousin} in}\quad \Rd.
\end{equation}

\end{proposition}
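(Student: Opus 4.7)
The plan is to leverage two standard facts: (i) the Hamiltonian $p \mapsto |p|^2$ in~\eqref{e.subpde} is convex and coercive, so (with $c(y) := 1 + \sigma\sqrt{\mu + V(y)} \geq 0$ thanks to~\eqref{e.admin}) a locally Lipschitz $u$ is a viscosity subsolution of~\eqref{e.subpde} if and only if $|Du(y)|^2 \leq c(y)$ for almost every $y$; and (ii) a Perron-type argument shows that $m_{\mu,\sigma}(\cdot,z)$ is itself a viscosity subsolution of~\eqref{e.subpde} on all of $\Rd$ and a supersolution of~\eqref{e.subpde} on $\Rd \setminus \{z\}$.

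The symmetry~\eqref{e.mmusym} will follow from (i): the class of subsolutions of~\eqref{e.subpde} is invariant under $u \mapsto -u$, so substituting $u \leftrightarrow -u$ in the definition~\eqref{e.mmudef} yields $m_{\mu,\sigma}(y,z) = m_{\mu,\sigma}(z,y)$. Subadditivity~\eqref{e.subadd} is immediate from the telescoping $u(y) - u(z) = (u(y) - u(x)) + (u(x) - u(z))$ upon taking suprema over subsolutions $u$. The Lipschitz regularity is inherited from the uniform bound $|Du| \leq \sqrt{1 + |\sigma|\sqrt{\mu + \sup V}}$ that each subsolution satisfies.

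The key point behind~\eqref{e.sub}--\eqref{e.super} is a \emph{gradient pinning}: at every $y_0 \in \Rd \setminus \{z\}$, any test function touching $\pm m_{\mu,\sigma}(\cdot, z)$ from the appropriate side must have squared gradient exactly equal to $c(y_0)$. To prove~\eqref{e.sub}, I would fix $\psi \geq -m_{\mu,\sigma}(\cdot, z)$ with equality at $y_0 \neq z$. Then $-\psi$ touches $m_{\mu,\sigma}(\cdot, z)$ from below at $y_0$, so the supersolution property of $m_{\mu,\sigma}$ for~\eqref{e.subpde} yields $|D\psi(y_0)|^2 \geq c(y_0)$. On the other hand, fact (i) together with the symmetry show that $-m_{\mu,\sigma}(\cdot,z)$ is also a viscosity subsolution of~\eqref{e.subpde}, which applied to $\psi$ gives $|D\psi(y_0)|^2 \leq c(y_0)$. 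Hence $|D\psi(y_0)|^2 = c(y_0)$, and squaring gives $(|D\psi(y_0)|^2 - 1)^2 = \sigma^2(\mu + V(y_0))$, which is exactly the subsolution inequality for~\eqref{e.cousin} (with equality). The proof of~\eqref{e.super} is the mirror argument: for $\varphi \leq m_{\mu,\sigma}(\cdot, z)$ with equality at $y_0 \neq z$, combine the supersolution of $m_{\mu,\sigma}$ applied to $\varphi$ and the subsolution of $-m_{\mu,\sigma}$ applied to $-\varphi \geq -m_{\mu,\sigma}$ to again pin $|D\varphi(y_0)|^2 = c(y_0)$.

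For~\eqref{e.super.global} with $\sigma \leq 0$, I extend the supersolution property to the pinning point $z$ by a direct computation. For $\varphi \leq m_{\mu,\sigma}(\cdot, z)$ with equality at $z$, one has $-\varphi \geq -m_{\mu,\sigma}(\cdot, z)$ at $z$, and the (globally valid) subsolution of $-m_{\mu,\sigma}$ for~\eqref{e.subpde} gives $|D\varphi(z)|^2 \leq c(z) = 1 - |\sigma|\sqrt{\mu + V(z)}$. The admissibility condition~\eqref{e.admin} for $\sigma \leq 0$ forces $c(z) \in [0, 1]$, so $|D\varphi(z)|^2 - 1 \leq -|\sigma|\sqrt{\mu + V(z)} \leq 0$, and squaring yields the required $(|D\varphi(z)|^2 - 1)^2 \geq \sigma^2(\mu + V(z))$. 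The main conceptual obstacle is the pinning step itself: since~\eqref{e.cousin} is nonconvex in $Du$, a subsolution of~\eqref{e.subpde} is not automatically a sub- or supersolution of~\eqref{e.cousin}. The resolution is to exploit the evenness of~\eqref{e.subpde} in $Du$ via the $u \leftrightarrow -u$ symmetry, which upgrades the maximal subsolution to a pair whose combined sub/super properties sandwich the squared gradient of any test function to an exact value.
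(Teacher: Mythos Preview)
Your proposal is correct and follows essentially the same route as the paper. Both arguments hinge on (i) the a.e.\ characterization of subsolutions of the convex equation~\eqref{e.subpde}, which yields the $u\leftrightarrow -u$ symmetry and hence~\eqref{e.mmusym}, and (ii) the Perron step giving that $m_{\mu,\sigma}(\cdot,z)$ is simultaneously a subsolution of~\eqref{e.subpde} in $\Rd$ and a supersolution in $\Rd\setminus\{z\}$; the ``gradient pinning'' $|D\phi(y_0)|^2 = c(y_0)$ obtained by combining these two properties, followed by squaring, is precisely how the paper derives~\eqref{e.sub}--\eqref{e.super}, and your treatment of~\eqref{e.super.global} via $|D\varphi|^2 \leq c \leq 1$ when $\sigma\leq 0$ matches the paper's as well.
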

\begin{proof}
Since~\eqref{e.subpde} is a convex equation, a function~$u\in \USC(\Rd)$ is a subsolution of~\eqref{e.subpde} if and only if~$u\in C^{0,1}_{\mathrm{loc}}(\Rd)$ (and thus $u$ is differentiable almost everywhere) and~$u$ satisfies~\eqref{e.subpde} at almost every point of~$\Rd$. See e.g.~\cite{BJ} or~\cite[Lemma 2.1]{AS2}. Since $V$ is uniformly bounded, a subsolution must in fact be globally Lipschitz, i.e., $u\in C^{0,1}(\Rd)$. Thus, for each $z\in \Rd$, $m_{\mu,\sigma}(\cdot,z)$ is the supremum of a family of equi-Lipschitz functions on $\Rd$ and hence belongs to $C^{0,1}(\Rd)$. As $u$ is the supremum of a family of subsolutions of~\eqref{e.subpde}, we have
\begin{equation} \label{e.subsub}
m_{\mu,\sigma}(\cdot,z) \quad \mbox{is a subsolution of~\eqref{e.subpde} in}\quad \Rd.
\end{equation}
We also obtain from the above characterization of subsolution of~\eqref{e.subpde} that $u \in \USC(\Rd)$ is a subsolution of~\eqref{e.subpde} if and only if $-u$ is also a subsolution of~\eqref{e.subpde}. This together with the definition of $m_{\mu,\sigma}$ yields~\eqref{e.mmusym} as well as that
\begin{equation} \label{e.subsubneg}
-m_{\mu,\sigma}(\cdot,z) \quad \mbox{is a subsolution of~\eqref{e.subpde} in}\quad \Rd.
\end{equation}
Finally, by the maximality of $m_{\mu,\sigma}(\cdot,z)$, the Perron method yields that
\begin{equation} \label{e.supersub}
m_{\mu,\sigma}(\cdot,z) \quad \mbox{is a supersolution of~\eqref{e.subpde} in}\quad \Rd \setminus\{ z \}.
\end{equation}
A proof of~\eqref{e.supersub} can also be found in~\cite[Proposition~3.2]{AS2}.

\smallskip

The subadditivity~\eqref{e.subadd} of $m_{\mu,\sigma}$ is immediate from maximality. Indeed, since $m_{\mu,\sigma}(\cdot,z) - m_{\mu,\sigma}(x,z)$ is a subsolution of~\eqref{e.subpde} in $\Rd$, we may use it as an admissible function in the definition of $m_{\mu,\sigma}(y,x)$. This yields~\eqref{e.subadd}.

\smallskip

Proceeding with the demonstration of~\eqref{e.sub},~\eqref{e.super} and~\eqref{e.super.global}, we select a smooth test function $\phi \in C^{\infty}(\Rd)$ and $x_0 \in \Rd$ such that 
\begin{equation} \label{e.test1}
y\mapsto m_{\mu,\sigma}(y,z) - \phi(y) \quad \mbox{has a local minimum at} \ y=y_0
\end{equation}
which is equivalent to
\begin{equation} \label{e.test2}
y\mapsto -m_{\mu,\sigma}(y,z) - (-\phi(y)) \quad \mbox{has a local maximum at} \ y=y_0.
\end{equation}
According to~\eqref{e.subsubneg} and~\eqref{e.test2},
\begin{equation} \label{e.testphi1}
\left|D\phi(y_0) \right|^2 \leq 1 + \sigma \sqrt{\mu + V(y_0) }.
\end{equation}
If $\sigma \leq 0$, then~\eqref{e.testphi1} implies that 
\begin{equation*} \label{}
\left( \left|D\phi(y_0) \right|^2 - 1 \right)^2 \geq \sigma^2  \left( \mu + V(y_0) \right). 
\end{equation*}
This completes the proof of~\eqref{e.super.global}. If $y_0 \neq z$, then~\eqref{e.supersub} and~\eqref{e.test1} yield 
\begin{equation*} 
\left|D\phi(y_0) \right|^2 \geq 1 + \sigma \sqrt{\mu + V(y_0) }
\end{equation*}
which, together with~\eqref{e.testphi1}, gives
\begin{equation*} 
\left|D\phi(y_0) \right|^2 = 1 + \sigma \sqrt{\mu + V(y_0) }.
\end{equation*}
Rearranging the equation and squaring the previous line, we get
\begin{equation} \label{e.testphi2}
\left( \left|D\phi(y_0) \right|^2 - 1 \right)^2  = \sigma^2 \left( \mu + V(y_0) \right).
\end{equation}
In view of the fact that~\eqref{e.test1} and~\eqref{e.test2} are equivalent, and that~\eqref{e.testphi2} is symmetric in $\phi$ and $-\phi$, we have proved both~\eqref{e.sub} and~\eqref{e.super}. 
\end{proof}

\subsection{Limiting shapes of $m_{\mu,\sigma}$ and identification of $\overline H$}
\label{ss.Hbar}

Since $m_{\mu,\sigma}$ is defined to be maximal  , the subadditive ergodic theorem implies that $m_{\mu,\sigma}$ is deterministic in the rescaled macroscopic limit. The precise statement we need is summarized in Proposition~\ref{p.shape}. Before presenting it, we first observe that $\inf_{\Rd} V$ and $\sup_{\Rd} V$ are deterministic quantities, thanks to the ergodicity hypothesis. 

\begin{lemma}
\label{l.detm}
There exist $\overline v,\underline v\in \R$ such that 
\begin{equation*} \label{}
\P\left[ \sup_{\Rd} V = \overline v \right] = \P\left[ \inf_{\Rd} V = \underline v \right] = 1.
\end{equation*}
\end{lemma}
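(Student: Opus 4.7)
The plan is to recognize that $F(V) := \sup_{\Rd} V$ is a translation-invariant random variable, and then invoke ergodicity to conclude it is almost surely constant. The same argument applies to $\inf_{\Rd} V$.

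First I would check measurability. Since $V \in \Omega = \BUC(\Rd)$ is continuous, we have
\begin{equation*}
F(V) = \sup_{x\in \Rd} V(x) = \sup_{x \in \Q^d} V(x).
\end{equation*}
Each evaluation map $V \mapsto V(x)$ is $\F$-measurable by the definition of $\F$, and a countable supremum of measurable functions is measurable. Hence $F$ is $\F$-measurable. By the uniform boundedness hypothesis~\eqref{e.pub}, $|F| \leq K_0$ almost surely, so $F$ takes values in $[-K_0,K_0]$ up to a null set.

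Next I would verify translation invariance. For every $y \in \Rd$,
\begin{equation*}
F(T_y V) = \sup_{x \in \Rd} (T_y V)(x) = \sup_{x \in \Rd} V(x+y) = \sup_{x\in \Rd} V(x) = F(V).
\end{equation*}
Therefore for every $c \in \R$ the event $E_c := \{ V \in \Omega \,:\, F(V) \leq c \}$ satisfies $T_y E_c = E_c$ for every $y\in \Rd$. In particular $\bigcap_{z\in \Rd} T_z E_c = E_c$, and hypothesis~\eqref{e.perg} gives $\P[E_c] \in \{0,1\}$.

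Finally, define
\begin{equation*}
\bar v := \inf \{ c \in \R \,:\, \P[E_c] = 1 \} \in [-K_0,K_0].
\end{equation*}
Monotone convergence in $c$ along a decreasing rational sequence yields $\P[F \leq \bar v] = 1$, while $\P[F < \bar v] = \lim_{c \uparrow \bar v, c\in \Q} \P[E_c] = 0$ by the definition of $\bar v$. Hence $\P[F = \bar v] = 1$. Applying the same argument to $-V$ (or directly to $G(V) := \inf_\Rd V$, which is likewise translation-invariant and measurable as a countable infimum over $\Q^d$) produces the corresponding deterministic value $\underline v$. There is no real obstacle here; the only mildly delicate point is matching the form of the ergodicity hypothesis~\eqref{e.perg}, which is handled automatically because a translation-invariant event $E$ satisfies $\bigcap_{z} T_z E = E$.
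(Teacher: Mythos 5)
Your proof is correct and follows essentially the same route as the paper's: both exploit that events of the form $\{ \sup_{\Rd} V \leq c\}$ (resp.\ the analogous events for the infimum) are translation-invariant and hence trivial by~\eqref{e.perg}, and then identify the deterministic value as the threshold in $c$, with~\eqref{e.pub} guaranteeing it lies in $[-K_0,K_0]$. The extra details you supply (measurability via a countable supremum over the rationals, and checking that a translation-invariant event satisfies $\bigcap_{z}T_zE=E$ so that~\eqref{e.perg} applies) simply make explicit what the paper leaves implicit.
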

\begin{proof}
For each $t\in \R$, the events $\left\{ V \in \Omega\,:\, \inf_{\Rd} V < t \right\}$ and $\left\{ V \in \Omega\,:\, \sup_{\Rd} V > t \right\}$ are invariant under translations and therefore have probability either 0 or 1 by~\eqref{e.perg}. 
We take $\overline v$ to be the largest value of $t$ for which $\P \left[\sup_{\Rd} V > t \right] = 1$ and $\underline v$ to be the smallest value of $t$ for which $\P \left[\inf_{\Rd} V < t \right] = 1$. In view of~\eqref{e.pub}, we have $-K_0\leq \underline v \leq \overline v \leq K_0$. The statement of the lemma follows. 
\end{proof}

We assume throughout the rest of the paper that~$\underline v = 0$. Note that we may, without loss of generality, subtract any constant we like from the random potential~$V$ without altering the statement of~Theorem~\ref{t.main}.

\smallskip

The value of $\overline v$ prescribes, almost surely, the set of parameters $(\mu,\sigma)$ for which~$m_{\mu,\sigma}$ is finite, i.e., for which~\eqref{e.admin} holds. We denote this by
\begin{equation} \label{e.admindef}
\A:= \left\{ (\mu,\sigma) \in [0,\infty) \times [-1,1] \,:\, \sigma(\mu+\overline v)^{1/2} \geq -1 \right\}.
\end{equation}
It is convenient to set
\begin{equation} \label{e.kappa}
\kappa:= 1-\overline v.
\end{equation}
Note that if $\kappa \geq 0$, then $(\kappa,-1) \in \A$ and $\kappa$ is the largest value of $\mu$ for which $(\mu,-1) \in \A$. If on the other hand $\kappa < 0$, then $(\mu,-1) \not\in\A$ for every $\mu \geq 0$.

We also define the following subset $\A'$ of $\A$, which consists of those parameters which play a role in the proof of~Theorem~\ref{t.main}:
\begin{equation} \label{e.Aprime}
\A':= \big\{ (\mu,\sigma) \in \A\,:\, \mu = 0 \ \mbox{or} \ \sigma \in \{ -1,1\} \big\}
\end{equation}
Observe that there exists a unique element $(\mu_*,\sigma_*)\in \A'$ for which 
\begin{equation*} \label{}
\sigma_*\left( \mu_* + \overline v \right)^{1/2} = -1. 
\end{equation*}
In fact, with $\kappa$ as above, we have
\begin{equation} \label{e.minelem}
(\mu_*,\sigma_*) = \left\{ \begin{aligned} 
& (\kappa,-1) && \mbox{if} \ \kappa \geq 0, \\
& (0,-\overline v^{\,-1/2}) && \mbox{otherwise}.
\end{aligned} \right.
\end{equation}

\smallskip

We next establish some simple bounds on the growth of $m_{\mu,\sigma}$.

\begin{lemma}
\label{l.easybnds}
Assume $(\mu,\sigma)\in \A$ and $V\in \Omega$ satisfies $\inf_{\Rd} V = 0$ and $\sup_{\Rd} V = \overline v$. Then, for every $y,z\in \Rd$, we have the following: in the case that $\sigma \leq 0$,
\begin{equation} \label{e.mmuyz.-1}
\left( 1 +\sigma (\mu + \overline v)^{1/2} \right)^{1/2} |y-z| \leq m_{\mu,\sigma}(y,z) \leq \left( 1 +\sigma \mu^{1/2} \right)^{1/2} |y-z|
\end{equation}
and, in the case that $\sigma \geq 0$, 
\begin{equation} \label{e.mmuyz.+1}
 \left( 1 +\sigma \mu^{1/2} \right)^{1/2} |y-z|\leq m_{\mu,\sigma}(y,z) \leq \left( 1 +\sigma (\mu + \overline v)^{1/2} \right)^{1/2} |y-z|.
\end{equation}
\end{lemma}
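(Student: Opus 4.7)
The two inequalities in each case have different origins: the upper bound comes from the fact that $m_{\mu,\sigma}(\cdot,z)$ is itself a subsolution of the sub-equation~\eqref{e.subpde} and therefore Lipschitz, while the lower bound is produced by testing the definition~\eqref{e.mmudef} of $m_{\mu,\sigma}$ against a carefully chosen affine function. Throughout, I would keep in mind the elementary monotonicity: as $V(y)$ ranges over $[0,\overline v]$, the quantity $1+\sigma\sqrt{\mu+V(y)}$ ranges over the interval with endpoints $1+\sigma\mu^{1/2}$ and $1+\sigma(\mu+\overline v)^{1/2}$, and whether the left or right endpoint is the sup versus inf is determined by the sign of $\sigma$.

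\textbf{Upper bound.} By~\eqref{e.subsub}, $m_{\mu,\sigma}(\cdot,z)$ is a subsolution of~\eqref{e.subpde}. By the convexity of the equation (as noted in the proof of Proposition~\ref{p.mmurho}), this is equivalent to $m_{\mu,\sigma}(\cdot,z) \in C^{0,1}(\Rd)$ with
\begin{equation*}
|D m_{\mu,\sigma}(y,z)|^2 \leq 1 + \sigma \sqrt{\mu+V(y)} \quad \text{for a.e.\ } y\in\Rd.
\end{equation*}
Since $0 \leq V \leq \overline v$ pointwise, the supremum of the right-hand side over $y \in \Rd$ is $1+\sigma\mu^{1/2}$ when $\sigma \leq 0$ and $1+\sigma(\mu+\overline v)^{1/2}$ when $\sigma \geq 0$. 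Integrating along a straight segment from $z$ to $y$ and using $m_{\mu,\sigma}(z,z) = 0$ yields the upper bounds in~\eqref{e.mmuyz.-1} and~\eqref{e.mmuyz.+1}, respectively.

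\textbf{Lower bound.} Fix $y, z \in \Rd$ with $y \neq z$, set $\xi := (y-z)/|y-z|$, and consider the affine function $u(w) := c\, \xi \cdot (w-z)$, where I choose
\begin{equation*}
c^2 := \begin{cases} 1 + \sigma (\mu+\overline v)^{1/2} & \text{if } \sigma \leq 0,\\ 1 + \sigma \mu^{1/2} & \text{if } \sigma \geq 0.\end{cases}
\end{equation*}
In both cases $c^2 \geq 0$ (using $(\mu,\sigma)\in\A$ in the first case), so $c$ is well-defined. The sign-chasing in the preceding paragraph, applied in reverse, shows that in either case $c^2 \leq 1 + \sigma\sqrt{\mu+V(w)}$ for every $w \in \Rd$, i.e., $u$ is an (everywhere smooth) subsolution of~\eqref{e.subpde}. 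Therefore $u$ is admissible in~\eqref{e.mmudef}, and
\begin{equation*}
m_{\mu,\sigma}(y,z) \geq u(y) - u(z) = c\,\xi \cdot (y-z) = c\,|y-z|,
\end{equation*}
which is the lower bound in each case.

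\textbf{Anticipated difficulty.} The argument is largely a bookkeeping exercise; the only genuine subtlety is making sure the signs work out in all four combinations (upper vs.\ lower, $\sigma\geq 0$ vs.\ $\sigma\leq 0$), and verifying that the admissibility condition~\eqref{e.admin} is exactly what is needed for $c^2$ in the $\sigma\leq 0$ case to be nonnegative so that the affine test function is well-defined. No Perron-type machinery is needed here since both the subsolution property of $m_{\mu,\sigma}(\cdot,z)$ and the admissibility of the linear competitor are immediate from Proposition~\ref{p.mmurho} and the definition.
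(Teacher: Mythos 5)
Your proof is correct and follows essentially the same route as the paper: the upper bound from the a.e.\ gradient bound satisfied by $m_{\mu,\sigma}(\cdot,z)$ as a subsolution of~\eqref{e.subpde}, and the lower bound by inserting an explicit competitor into the definition~\eqref{e.mmudef} (you use the affine function $c\,\xi\cdot(w-z)$ where the paper uses the cone $c|y-z|$, an immaterial difference). Your remark that admissibility~\eqref{e.admin} is exactly what makes $c^2\geq 0$ in the $\sigma\leq 0$ case is the right observation and consistent with the paper.
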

\begin{proof}
The arguments for~\eqref{e.mmuyz.-1} and~\eqref{e.mmuyz.+1} are almost the same, so we only give the proof of~\eqref{e.mmuyz.-1}. The lower bound is immediate from the definition of $m_{\mu,\sigma}(\cdot,z)$ and the fact that the left side of~\eqref{e.mmuyz.-1}, as a function of $y$, is a subsolution of~\eqref{e.subpde} in~$\Rd$. To get the upper bound, we observe that any subsolution $u \in \USC(\Rd)$ of~\eqref{e.subpde} satisfies
\begin{equation} \label{e.uppbndgrbg}
|Du|^2 \leq \left( 1 +\sigma \mu^{1/2} \right) \quad \mbox{in} \ \Rd. 
\end{equation}
In particular, by the characterization of subsolutions mentioned in the first paragraph of the proof of~Proposition~\ref{p.mmurho}, we deduce that~\eqref{e.uppbndgrbg} holds at almost every point of~$\Rd$. This implies that~$u$ is Lipschitz with constant $\left( 1 +\sigma \mu^{1/2} \right)^{1/2}$. This argument applies to~$m_{\mu,\sigma}(\cdot,z)$ by~\eqref{e.subsub}. Since $m_{\mu,\sigma}(z,z) = 0$, we obtain the upper bound of~\eqref{e.mmuyz.-1}.
\end{proof}

We next prove some continuity and monotonicity properties for the function $(\mu,\sigma) \mapsto m_{\mu,\sigma}(y,z)$ on $\A$.

\begin{lemma}
\label{l.contmono}
Fix $V\in \Omega$ for which $\inf_{\Rd} V= 0$ and $\sup_{ \Rd}V = \overline v$ and suppose that $(\mu,\sigma) \in \A$ is such that $\sigma (\mu+\overline v)^{1/2} > -1$. Then 
\begin{equation} \label{e.cont}
\lim_{\A \ni (\nu,\tau) \to (\mu,\sigma)} \ \sup_{y,z\in\Rd,\, y\neq z} \frac{\left| m_{\mu,\sigma}(y,z) - m_{\nu,\tau}(y,z) \right|}{|y-z|} = 0. 
\end{equation}
For every pair $(\mu,\sigma) , (\nu,\tau) \in \A$ and $y,z\in \Rd$, we have
\begin{equation} \label{e.mono}
m_{\mu,\sigma}(y,z) \leq m_{\nu,\tau}(y,z) \qquad \mbox{provided that} \qquad \left\{ \begin{aligned} 
& \mu = \nu \quad \mbox{and} \quad \sigma \leq \tau, \\ & \qquad \mbox{or} \\
& \sigma = \tau \quad \mbox{and} \quad \sigma \mu \leq \sigma \nu.
\end{aligned} \right.
\end{equation}
Moreover, for every $(\mu,\sigma), (\nu,\tau) \in \A$ with $\sigma \mu < \tau \nu$, there exists $c>0$ such that, for all $y,z\in \Rd$,
\begin{equation} \label{e.mono-strict}
m_{\mu,\sigma}(y,z) \leq m_{\nu,\tau}(y,z) - c|y-z|.
\end{equation}
\end{lemma}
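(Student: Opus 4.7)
The plan is to deduce all three statements from a pointwise comparison of the right-hand sides of~\eqref{e.subpde} combined with either a multiplicative or an additive perturbation of subsolutions. For the monotonicity~\eqref{e.mono}, the first step is to observe that under either hypothesis the pointwise bound $1 + \sigma\sqrt{\mu + V(y)} \le 1 + \tau\sqrt{\nu+V(y)}$ holds at every $y\in \Rd$: if $\mu = \nu$ and $\sigma \le \tau$ this is immediate from $\sqrt{\mu+V(y)} \ge 0$, while if $\sigma = \tau$ and $\sigma\mu \le \sigma\nu$ then either $\sigma \ge 0$ with $\mu \le \nu$ or $\sigma \le 0$ with $\mu \ge \nu$, and monotonicity of $s\mapsto \sqrt{s+V(y)}$ yields the claim. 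Any subsolution of~\eqref{e.subpde} at $(\mu,\sigma)$ is therefore a subsolution at $(\nu,\tau)$, and~\eqref{e.mono} follows immediately from the definition of $m_{\mu,\sigma}$ as a supremum.

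For the continuity~\eqref{e.cont}, the key point is that the hypothesis $\sigma(\mu+\bar v)^{1/2} > -1$ is strict, so there exists $\delta_0 > 0$ with $1 + \sigma\sqrt{\mu + s} \ge \delta_0$ for every $s \in [0,\bar v]$. By continuity of $(\nu,\tau,s) \mapsto 1 + \tau\sqrt{\nu+s}$, for $(\nu,\tau)\in \A$ sufficiently close to $(\mu,\sigma)$ I can produce scalars $\lambda_1(\nu,\tau), \lambda_2(\nu,\tau) \in (0,1]$ that tend to $1$ as $(\nu,\tau) \to (\mu,\sigma)$ and satisfy
\begin{equation*}
\lambda_1^2\bigl(1 + \sigma\sqrt{\mu+s}\bigr) \le 1 + \tau\sqrt{\nu+s} \quad\text{and}\quad \lambda_2^2\bigl(1 + \tau\sqrt{\nu+s}\bigr) \le 1 + \sigma\sqrt{\mu+s}
\end{equation*}
for all $s \in [0,\bar v]$. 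For any subsolution $u$ of~\eqref{e.subpde} at $(\mu,\sigma)$, the rescaled function $\lambda_1 u$ is then a subsolution at $(\nu,\tau)$, giving $\lambda_1 m_{\mu,\sigma}(y,z) \le m_{\nu,\tau}(y,z)$; symmetrically, $\lambda_2 m_{\nu,\tau}(y,z) \le m_{\mu,\sigma}(y,z)$. Combining these with the linear upper bound of Lemma~\ref{l.easybnds} controls $|m_{\mu,\sigma}(y,z) - m_{\nu,\tau}(y,z)|/|y-z|$ uniformly by $\max(1-\lambda_1, 1-\lambda_2)\cdot C$, which tends to zero.

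For the strict monotonicity~\eqref{e.mono-strict}, rather than scale multiplicatively (which degenerates when $m_{\mu,\sigma}$ is small near the boundary of $\A$), I would use an additive perturbation. The hypothesis $\sigma\mu < \tau\nu$, interpreted in the relevant sign case of~\eqref{e.mono}, produces a uniform gap $\eta := \inf_{s\in[0,\bar v]}\bigl[(1+\tau\sqrt{\nu+s}) - (1+\sigma\sqrt{\mu+s})\bigr] > 0$. Any subsolution $u$ of~\eqref{e.subpde} at $(\mu,\sigma)$ is globally Lipschitz with a deterministic constant $L$ depending only on $(\mu,\sigma)$ by Lemma~\ref{l.easybnds}, and for any unit vector $e \in \Rd$ the function $x \mapsto u(x) + c\,e\cdot x$ satisfies $|Du + c e|^2 \le (1+\sigma\sqrt{\mu+V(y)}) + 2cL + c^2$, which is bounded above by $1+\tau\sqrt{\nu+V(y)}$ as soon as $2cL + c^2 \le \eta$. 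Fixing such a $c = c(L,\eta) > 0$ and choosing $e = (y-z)/|y-z|$ produces a subsolution at $(\nu,\tau)$ that witnesses $m_{\nu,\tau}(y,z) \ge u(y) - u(z) + c|y-z|$; supping over $u$ then gives~\eqref{e.mono-strict}. The main obstacle is the careful interpretation of the strict hypothesis $\sigma\mu < \tau\nu$ in terms of a uniform positive gap between the right-hand sides; once the sign cases from~\eqref{e.mono} are untangled the perturbative constructions themselves are routine, the only essential input being the uniform Lipschitz estimate from Lemma~\ref{l.easybnds}.
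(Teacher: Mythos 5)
Your proposal is correct and takes essentially the same route as the paper: monotonicity by inclusion of the admissible subsolution classes, continuity by scaling subsolutions by factors tending to $1$ (using the strict bound $\sigma(\mu+\overline v)^{1/2}>-1$ together with the linear bounds of Lemma~\ref{l.easybnds}), and strict monotonicity via an additive perturbation producing the extra $c|y-z|$ --- the paper adds the cone $c|y-z|$ to $m_{\mu,\sigma}(\cdot,z)$ where you add the linear function $c\,e\cdot x$, which is the same computation. Note that, just like the paper's own one-line argument for~\eqref{e.mono-strict}, your uniform-gap claim is justified only in the sign configurations where the two right-hand sides of~\eqref{e.subpde} are pointwise ordered (e.g.\ $\sigma=\tau$, or the pairs arising in $\A'$), which are the only cases in which the lemma is actually applied.
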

\begin{proof}
Let $0< \ep < 1$, and observe that, by~\eqref{e.subsub}, for $\lambda:= 1-\ep$, the function $w:= \lambda m_{\mu,\sigma}(\cdot,z)$ is a subsolution of the equation
\begin{equation*} \label{}
|Dw|^2 \leq \lambda^2 \left( 1 + \sigma \sqrt{ \mu +V(y)} \right) \quad \mbox{in} \ \Rd. 
\end{equation*}
Observe that the infimum over $\Rd$ of the term in parentheses on the right-hand side is positive by assumption. Thus if $(\nu,\tau)$ is sufficiently close to $(\mu,\sigma)$, we have that for all $y\in \Rd$,
\begin{equation*} \label{}
\lambda^2 \left( 1 + \sigma \sqrt{ \mu +V(y)} \right) < 1 + \tau \sqrt{ \nu +V(y)}.
\end{equation*}
By maximality, we deduce that $w \leq m_{\nu,\tau}(\cdot,z)$ for all $(\nu,\tau)$ sufficiently close to $(\mu,\sigma)$, depending on~$\ep$. According to the bounds in Lemma~\ref{l.easybnds}, we obtain, for a constant $C>0$ depending only on $(\mu,\sigma,\overline v)$, the estimate
\begin{equation*} \label{}
m_{\mu,\sigma}(y,z) \leq m_{\nu,\tau}(y,z) + C\ep|y-z|
\end{equation*}
Reversing the roles of $(\mu,\sigma)$ and $(\nu,\tau)$, using that $\tau(\nu+\overline v) >-1$ for $(\nu,\tau)$ close to $(\mu,\sigma)$, and arguing similarly, we get, for all $(\nu,\tau)$ sufficiently close to $(\mu,\sigma)$, that
\begin{equation*} \label{}
m_{\nu,\tau}(y,z) \leq m_{\mu,\sigma}(y,z) + C\ep|y-z|.
\end{equation*}
This completes the proof of~\eqref{e.cont}.

The monotonicity property~\eqref{e.mono} is immediate from the definition~\eqref{e.mmudef} since the condition on the right of~\eqref{e.mono} implies that the right side of~\eqref{e.subpde} is larger for $(\nu,\tau)$ than for $(\mu,\sigma)$, and hence the admissible class of subsolutions in~\eqref{e.mmudef} is larger.

The strict monotonicity property in the last statement of the lemma follows from the fact, which is easy to check from the characterization of subsolutions mentioned in the proof of Proposition~\ref{p.mmurho}, that $y \mapsto m_{\mu,\sigma}(y,z) + c|y-z|$ is a subsolution of 
\begin{equation*} \label{}
|Dw|^2 \leq 1 + \sigma \sqrt{\nu + V(y)} \quad  \mbox{in} \ \Rd,
\end{equation*}
provided $c>0$ is sufficiently small, depending on a lower bound for $\sigma(\nu-\mu)$.
\end{proof}

The following proposition is a special case of, for example,~\cite[Proposition 4.1]{AS2} or~\cite[Proposition 2.5]{AT1}), and so we do not present the proof. The argument is an application of the subadditive ergodic theorem, using the subadditivity of $m_{\mu,\sigma}$,~\eqref{e.subadd}.

\begin{proposition}
For each $(\mu,\sigma)\in \A$, there exists a convex, positively homogeneous function $\overline m_{\mu,\sigma} \in C(\Rd)$ such that 
\label{p.shape}
\begin{equation*} \label{}
\P \left[ \forall (\mu,\sigma) \in \mathcal A, \ \forall R>0, \ \limsup_{t\to \infty} \sup_{y,z\in B_{R}} \left| \frac{m_{\mu,\sigma}(ty,tz)}{t} - \overline m_{\mu,\sigma}(y-z)  \right| = 0 \right]  = 1.
\end{equation*}
\end{proposition}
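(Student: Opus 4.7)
The result is a standard application of the subadditive ergodic theorem combined with the monotonicity and continuity structure from Lemma~\ref{l.contmono}, and indeed the author explicitly defers to \cite[Proposition~4.1]{AS2} and \cite[Proposition~2.5]{AT1}; I sketch the argument in three steps. For \emph{Step 1 (pointwise a.s.\ limit along rays)}, fix $(\mu,\sigma)\in\A$ and a unit vector $e\in\Rd$, and for $s<t$ set $F_{s,t}(V):=m_{\mu,\sigma}(se,te)(V)$. By \eqref{e.subadd}, $F_{s,t}\leq F_{s,u}+F_{u,t}$ for $s<u<t$; by the stationarity \eqref{e.pstat} together with the translation identity $m_{\mu,\sigma}(y+h,z+h)(V)=m_{\mu,\sigma}(y,z)(T_hV)$, the family $\{F_{s,t}\}$ is stationary under shifts of $s,t$; and by Lemma~\ref{l.easybnds}, $|F_{s,t}|\leq C(t-s)$ for a deterministic constant $C$. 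Kingman's subadditive ergodic theorem yields $t^{-1}F_{0,t}\to X_\infty$ almost surely, and \eqref{e.perg} forces $X_\infty$ to be a deterministic constant, which I call $\bar m_{\mu,\sigma}(e)$.

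For \emph{Step 2 (uniformity in $(y,z)$ and identification of $\bar m_{\mu,\sigma}$)}, extend $\bar m_{\mu,\sigma}$ to all of $\Rd$ by positive homogeneity, $\bar m_{\mu,\sigma}(\lambda e):=\lambda\bar m_{\mu,\sigma}(e)$ for $\lambda\geq 0$. Passing \eqref{e.subadd} to the limit gives $\bar m_{\mu,\sigma}(x+y)\leq\bar m_{\mu,\sigma}(x)+\bar m_{\mu,\sigma}(y)$, which together with positive homogeneity yields convexity and hence continuity on $\Rd$. Lemma~\ref{l.easybnds} gives a deterministic Lipschitz bound on $(y,z)\mapsto t^{-1}m_{\mu,\sigma}(ty,tz)$, so a.s.\ pointwise convergence on a countable dense subset of $B_R\times B_R$ upgrades to a.s.\ uniform convergence on $B_R\times B_R$.

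For \emph{Step 3 (uniformity in $(\mu,\sigma)$)}, write $\A^\circ:=\{(\mu,\sigma)\in\A:\sigma(\mu+\bar v)^{1/2}>-1\}$, choose a countable dense $D\subset\A^\circ\cup\{(\mu_*,\sigma_*)\}$, and apply Steps~1--2 simultaneously for every element of $D$ and every direction $e$ in a countable dense subset of $\Rd$; the resulting event has probability one as a countable intersection of probability-one events. On this event, Lemma~\ref{l.contmono} extends the convergence uniformly to all $(\mu,\sigma)$ in compact subsets of $\A^\circ$, because \eqref{e.cont} controls $|m_{\nu,\tau}(y,z)-m_{\mu,\sigma}(y,z)|/|y-z|$ uniformly in $(y,z)$ as $(\nu,\tau)\to(\mu,\sigma)$; the limit functions inherit the same continuity and define a continuous extension $(\mu,\sigma,e)\mapsto\bar m_{\mu,\sigma}(e)$ on $\A^\circ\times\Rd$. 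The one delicate point, and in my view the main obstacle, is uniformity near the boundary parameter $(\mu_*,\sigma_*)$, where Lemma~\ref{l.contmono} does not apply since the admissible subsolution class of \eqref{e.subpde} degenerates; here one runs the ergodic theorem directly at $(\mu_*,\sigma_*)$ in Step~1 and uses the monotonicity \eqref{e.mono} to sandwich $m_{\mu_*,\sigma_*}$ between suitable interior parameter values, thereby gluing the two regimes together.
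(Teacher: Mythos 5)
The paper does not prove this proposition at all---it cites \cite[Proposition~4.1]{AS2} and \cite[Proposition~2.5]{AT1}---so your sketch has to be judged on its own, and it has a genuine gap at exactly the point where those cited proofs do real work. Your Step~1 applies the subadditive ergodic theorem to $F_{s,t}=m_{\mu,\sigma}(se,te)$, which yields the \emph{fixed-vertex} limit $t^{-1}m_{\mu,\sigma}(te,0)\to\overline m_{\mu,\sigma}(e)$ a.s. But the proposition asserts convergence of $t^{-1}m_{\mu,\sigma}(ty,tz)$ with \emph{both} arguments scaled, i.e.\ with the vertex $tz$ escaping to infinity. For a fixed pair $(y,z)$ with $z\neq 0$ one has, by stationarity, only the identity $m_{\mu,\sigma}(ty,tz)(V)=m_{\mu,\sigma}(t(y-z),0)(T_{tz}V)$: equality in law, evaluated at an environment shifted by an amount growing linearly in $t$. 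Almost sure convergence of $t^{-1}m_{\mu,\sigma}(tw,0)(V)$ does not transfer to $t^{-1}m_{\mu,\sigma}(tw,0)(T_{tz}V)$, the Lipschitz bound of Lemma~\ref{l.easybnds} cannot absorb a shift of size $t|z|$, and plain subadditivity through the origin only gives $\limsup\, t^{-1}m_{\mu,\sigma}(ty,tz)\le \overline m_{\mu,\sigma}(y)+\overline m_{\mu,\sigma}(z)$, which is weaker than $\overline m_{\mu,\sigma}(y-z)$. (One also checks that $X_n:=m(ny,nz)$ is not subadditive in the Kingman form $X_{n+m}\le X_n+X_m\circ\theta^n$.) So your Step~2 upgrade ``pointwise on a countable dense set plus Lipschitz implies uniform on $B_R\times B_R$'' is fine as stated, but the pointwise input it needs---a.s.\ convergence for each fixed pair $(y,z)$---was never established. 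Closing this requires an additional device, which is precisely the content of the cited results: e.g.\ Egorov's theorem combined with the multiparameter (Wiener) ergodic theorem to show that, with high density, points $x$ near $tz$ have environments $T_xV$ for which the fixed-vertex convergence has already kicked in, followed by a chaining argument using~\eqref{e.subadd} and the Lipschitz bounds; alternatively one invokes the Akcoglu--Krengel multiparameter subadditive theorem in a form adapted to moving base points.

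Two smaller remarks. First, in Step~1 the limit given by Kingman is a priori invariant only under shifts along $e$; you should note that the Lipschitz bound makes it invariant under all of $\{T_h\}_{h\in\Rd}$, so that~\eqref{e.perg} applies. Second, in Step~3 the degenerate set where Lemma~\ref{l.contmono} fails is not the single point $(\mu_*,\sigma_*)$ but the whole curve $\{(\mu,\sigma)\in\A:\ \sigma(\mu+\overline v)^{1/2}=-1\}$ (an uncountable family), and your sandwiching via~\eqref{e.mono} only gives a one-sided bound there since there are no admissible parameters ``below'' the curve; since the proposition quantifies over all of $\A$ inside a single full-probability event, this boundary family needs its own treatment (e.g.\ running the ergodic theorem along the curve together with monotonicity of the right-hand side $1-\sqrt{(\mu+V)/(\mu+\overline v)}$ in $\mu$), or else one should note, as the authors implicitly do, that only parameters in $\A'$ are used later.
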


We are now ready to construct $\overline H$. We continue by introducing two functions
\begin{equation*} \label{}
\overline H^{\,-} : \Rd \to \{ -\infty \} \cup [0,\infty) \quad \mbox{and} \quad \overline H^{\,+}: \Rd \to [0,\infty). 
\end{equation*}
defined by 
\begin{align*}
\overline H^{\,-}\!(p) & := \sup\left\{ \mu \geq 0 \,:\, \forall y\in \Rd, \ \overline m_{\mu,-1} (y) \geq p\cdot y  \right\},\\
\overline H^{\,+}\!(p) & := \inf\left\{ \mu \geq 0 \,:\, \forall y\in \Rd, \ \overline m_{\mu,+1}(y) \geq p\cdot y  \right\}.
\end{align*}
We take $\overline H^{\,-}\!(p):= -\infty$ if the admissible set in its definition  is empty. Since $\mu \mapsto \overline m_{\mu,-1}(\cdot)$ is decreasing, we see that $\overline H^-(p) = -\infty$ if and only if there exists $y \in \Rd$ such that $\overline m_{0,-1}(y) < p\cdot y$. We define the effective Hamiltonian to be the maximum of these:
\begin{equation*} \label{}
\overline H(p) := \max\left\{ \overline H^{\,-}\!(p) , \overline H^{\,+}\!(p) \right\}. 
\end{equation*}
Observe that since, for all $\mu,\nu\geq 0$,
\begin{equation*} \label{}
\overline m_{\mu,-1} \leq \overline m_{\nu,1},
\end{equation*}
we have that 
\begin{equation*} \label{}
\left\{ p\in \Rd \,:\, \overline H^{\,+}\!(p) > 0 \right\} \subseteq \left\{ p\in \Rd \,:\, \overline H^{\,-}\!(p)= -\infty \right\}.
\end{equation*}
Therefore we can also write
\begin{equation*} \label{}
\overline H(p)  =  \left\{ \begin{aligned} 
& \overline H^{\,-}\!(p) && \mbox{if} \ \overline H^{\,-}\!(p) \neq -\infty,\\
& \overline H^{\,+}\!(p) && \mbox{otherwise}.
\end{aligned} \right.
\end{equation*}

We next check that $\overline H$ is coercive, i.e., that~\eqref{e.Hbarcoer} holds.

\begin{lemma}
\label{l.coercivity}
For every $p\in \Rd$,
\begin{equation} \label{e.Hbarcoercive}
\left( |p|^2 -1 \right)^2 - \overline v \leq \overline H(p) \leq \left( |p|^2 - 1\right)^2.
\end{equation}
\end{lemma}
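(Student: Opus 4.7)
The plan is to first extract, via Proposition~\ref{p.shape}, the sharp shape bounds on $\overline m_{\mu,\pm 1}$ from the elementary one-point estimates of Lemma~\ref{l.easybnds}. Writing $m_{\mu,\sigma}(ty,0)/t$ and sending $t\to\infty$, we obtain, for every $y\in\Rd$:
\begin{equation*}
\bigl(1-(\mu+\overline v)^{1/2}\bigr)^{1/2}|y| \,\leq\, \overline m_{\mu,-1}(y) \,\leq\, \bigl(1-\mu^{1/2}\bigr)^{1/2}|y|, \qquad (\mu,-1)\in\A,
\end{equation*}
and
\begin{equation*}
\bigl(1+\mu^{1/2}\bigr)^{1/2}|y| \,\leq\, \overline m_{\mu,+1}(y) \,\leq\, \bigl(1+(\mu+\overline v)^{1/2}\bigr)^{1/2}|y|, \qquad \mu\geq 0.
\end{equation*}
All of the case analysis below is really just a matter of inserting the right value of $\mu$ and testing the admissibility conditions at the special direction $y=p$.

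For the upper bound $\overline H(p)\leq (|p|^2-1)^2$, I split on $|p|\geq 1$ vs.\ $|p|\leq 1$. If $|p|\geq 1$, take $\mu:=(|p|^2-1)^2$; the lower bound on $\overline m_{\mu,+1}$ gives $\overline m_{\mu,+1}(y)\geq |p||y|\geq p\cdot y$, so $\mu$ lies in the admissible set defining $\overline H^{+}(p)$, hence $\overline H^{+}(p)\leq (|p|^2-1)^2$; meanwhile, the upper bound on $\overline m_{\mu',-1}$ tested at $y=p$ shows that no $\mu'>0$ belongs to the set defining $\overline H^{-}(p)$, so $\overline H^{-}(p)\leq 0\leq (|p|^2-1)^2$. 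If $|p|\leq 1$, then $\overline H^{+}(p)=0$ (take $\mu=0$ in the defining set and use $\overline m_{0,+1}(y)\geq |y|\geq p\cdot y$), while for any $\mu'>(1-|p|^2)^2$ in $\A$, the upper bound yields $\overline m_{\mu',-1}(p)\leq (1-(\mu')^{1/2})^{1/2}|p|<|p|^2=p\cdot p$, excluding $\mu'$ from the set, so $\overline H^{-}(p)\leq (1-|p|^2)^2$.

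For the lower bound $\overline H(p)\geq (|p|^2-1)^2-\overline v$, the case $(|p|^2-1)^2\leq \overline v$ is immediate since $\overline H(p)\geq \overline H^{+}(p)\geq 0$ (the set defining $\overline H^{+}$ is always non-empty by choosing $\mu$ sufficiently large in the lower bound on $\overline m_{\mu,+1}$). Otherwise, for $|p|\geq 1$ I set $\mu:=(|p|^2-1)^2-\overline v$; then $(1+(\mu+\overline v)^{1/2})^{1/2}=|p|$, and the upper bound on $\overline m_{\mu',+1}$ for any $\mu'<\mu$ gives $\overline m_{\mu',+1}(p)<|p|^2=p\cdot p$, which shows $\mu'$ is not in the set defining $\overline H^{+}(p)$; hence $\overline H^{+}(p)\geq \mu$. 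For $|p|\leq 1$ I set $\mu:=(1-|p|^2)^2-\overline v\geq 0$; the condition $\mu+\overline v=(1-|p|^2)^2\leq 1$ places $(\mu,-1)\in\A$, and the lower bound on $\overline m_{\mu,-1}$ gives $\overline m_{\mu,-1}(y)\geq |p||y|\geq p\cdot y$, so $\mu$ is in the set defining $\overline H^{-}(p)$, yielding $\overline H^{-}(p)\geq \mu$.

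There is no genuine obstacle here: once the shape estimates from Lemma~\ref{l.easybnds} are available in the limit, everything reduces to evaluating the two sides at $y=p$ and checking the inequality $(1\pm t^{1/2})^{1/2}=|p|$ holds for precisely the threshold value $t=(|p|^2-1)^2$ (respectively $t=(1-|p|^2)^2$). The only bookkeeping item requiring care is verifying admissibility $(\mu,-1)\in\A$, i.e.\ $\mu+\overline v\leq 1$, in the $\sigma=-1$ arguments; this is automatic in all cases above because the chosen $\mu$ satisfies $\mu+\overline v=(1-|p|^2)^2\leq 1$ or is compared against some $\mu'\leq 1-\overline v$.
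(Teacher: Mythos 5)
Your proof is correct and follows essentially the same route as the paper: transfer the two-sided estimates of Lemma~\ref{l.easybnds} to the limit shapes $\overline m_{\mu,\pm1}$ via Proposition~\ref{p.shape} (and Lemma~\ref{l.detm}), then read off \eqref{e.Hbarcoercive} from the definitions of $\overline H^{\,\pm}$. The paper leaves the final case-checking implicit (``in view of the definition of $\overline H$''), which you simply carry out explicitly, correctly keeping the outer square root from \eqref{e.mmuyz.-1}--\eqref{e.mmuyz.+1} and verifying the admissibility condition $(\mu,-1)\in\A$ where needed.
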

\begin{proof}
According to Proposition~\ref{p.shape} and Lemmas~\ref{l.detm} and~\ref{l.easybnds}, for every $\mu\geq 0$,
\begin{equation*} \label{}
\left( 1 - (\mu + \overline v)^{1/2} \right) |y| \leq \overline m_{\mu,-1}(y) \leq \left( 1 - \mu^{1/2} \right) |y|,
\end{equation*}
provided that $(\mu,-1)\in\A$, and
\begin{equation*} \label{}
\left( 1 + \mu^{1/2} \right) |y| \leq \overline m_{\mu,+1}(y) \leq \left( 1 + (\mu + \overline v)^{1/2} \right) |y|.
\end{equation*}
In view of the definition of $\overline H$, this yields the estimate~\eqref{e.Hbarcoercive}.
\end{proof}

In order to describe $\overline H$ further, we partition $\Rd$ into four regions, generally corresponding to the following features in the graph of $\overline H$: the flat hilltop, the flat valley, the slope between the latter two, and the unbounded region outside the flat valley (see Figure~\ref{fig.effham1}). We define

\begin{equation} \label{e.Ki}
\left\{\begin{aligned}
K_1 & := \bigcap_{(\mu,\sigma) \in \A' \setminus\{  (\mu_*,\sigma_*)\} } \partial \overline m_{\mu,\sigma}(0),    &  K_2 & := \bigcup_{(\mu,-1) \in \A', \, 0< \mu < \mu_*} \partial \overline m_{\mu,-1}(\partial B_1),  \\
K_3 & := \bigcup_{(0,\sigma) \in  \A' \setminus\{  (\mu_*,\sigma_*)\}} \partial \overline m_{0,\sigma}(\partial B_1),  & K_4 & := \bigcup_{\mu > 0}  \partial \overline m_{\mu,1}(\partial B_1).
\end{aligned}\right.
\end{equation}
Here $\partial \phi(x_0)$ denotes the subdifferential of a convex function $\phi:\Rd \to \R$ at~$x_0\in \Rd$,
\begin{equation*} \label{}
\partial \phi(x_0) := \left\{ q\in\Rd\,:\, \phi(y) \geq \phi(x_0) + q \cdot (y-x_0) \right\},
\end{equation*}
and we write $\partial \phi(E) := \cup\left\{ \partial \phi(x)\,:\, x\in \E\right\}$ for $E\subseteq \Rd$.

We remark that~$0 \in K_1$ by the nonnegativity of~$m_{\mu,\sigma}$  and~$K_2 = \emptyset$ if and only if~$\mu_* = 0$. Since $m_{\mu,0}(y,0) =  \overline m_{\mu,0}(y) = |y|$ for every $\mu\geq 0$, we see that $\partial B_1 \subseteq K_3$. Finally, we note that~$K_4$ is unbounded, while~$K_1\cup K_2 \cup K_3$ is bounded. 

\smallskip

The following proposition gives us a representation of $\overline H$ which is convenient for the proof of Theorem~\ref{t.main}. It also confirms that the basic features of $\overline H$ are portrayed accurately in Figure~\ref{fig.effham1}.

\begin{proposition}
\label{p.partition}
For each $p\in \Rd \setminus K_1$, there exists a unique $\mu \geq 0$ such that, for some $(\mu,\sigma) \in \A'$, we have $p \in \partial \overline m_{\mu,\sigma}(\partial B_1)$. In particular,~$\{ K_1,K_2,K_3,K_4\}$ is a disjoint partition of~$\Rd$. Moreover, with $\mu_*$ as defined in~\eqref{e.minelem}, we have
\begin{equation} \label{e.Hbargoodform}
\overline H(p) = \left\{ \begin{aligned}
& \mu_*  && \mbox{for} \ p\in K_1,\\
& \mu && \mbox{for} \ p \in \partial \overline m_{\mu,\sigma}(\partial B_1),\ (\mu,\sigma) \in \A'.
 \end{aligned} \right.
\end{equation}
\end{proposition}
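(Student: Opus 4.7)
\emph{Plan.} The approach is to view $\A'$ as a single continuous, monotone one-parameter curve and to track the convex sets $C_{\mu,\sigma} := \partial \overline m_{\mu,\sigma}(0)$. Since each $\overline m_{\mu,\sigma}$ is convex and positively homogeneous by Proposition~\ref{p.shape}, it is the support function of $C_{\mu,\sigma}$; in particular, $p \in C_{\mu,\sigma}$ if and only if $\overline m_{\mu,\sigma}(y) \geq p \cdot y$ for every $y \in \Rd$, and $\bigcup_{y \in \partial B_1} \partial \overline m_{\mu,\sigma}(y)$ coincides with the topological boundary $\partial C_{\mu,\sigma}$ (a point $p$ of $C_{\mu,\sigma}$ lies in an exposed face iff equality $\overline m_{\mu,\sigma}(y) = p\cdot y$ holds for some $y \neq 0$, iff $p \in \partial C_{\mu,\sigma}$).

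I would parametrize $\A' \setminus \{(\mu_*,\sigma_*)\}$ by a continuous injective path $\gamma:(0,\infty) \to \A' \setminus \{(\mu_*,\sigma_*)\}$ traversing, in order, the arc $\sigma = -1$ (with $\mu$ decreasing from $\mu_*$ down to $0$), then the arc $\mu = 0$ (with $\sigma$ increasing), then the ray $\sigma = 1$ (with $\mu$ increasing to $\infty$). Along this path $\sigma\mu$ is nondecreasing, and by Lemma~\ref{l.contmono} the map $t \mapsto \overline m_{\gamma(t)}$ is continuous and monotone, with the strict-monotonicity inequality $\overline m_{\mu,\sigma}(y) \leq \overline m_{\nu,\tau}(y) - c|y|$ holding whenever $\sigma\mu < \tau\nu$. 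Consequently, the family $C_t := C_{\gamma(t)}$ is monotone increasing, and the lower bound $\overline m_{\mu,1}(y) \geq (1+\mu^{1/2})^{1/2}|y|$ of Lemma~\ref{l.easybnds} forces $C_t$ to exhaust $\Rd$ as $t \to \infty$; by definition $\bigcap_{t>0} C_t = K_1$.

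With $t^*(p) := \inf\{t > 0 : p \in C_t\}$, the case $t^*(p) = 0$ characterizes $p \in K_1$, while the case $t^*(p) > 0$ combined with continuity of $\gamma$ and $\overline m$ forces $p \in \partial C_{t^*(p)}$, placing $p$ in $\partial \overline m_{\mu,\sigma}(\partial B_1)$ for $(\mu,\sigma) := \gamma(t^*(p))$. Coverage of $\Rd = K_1 \cup K_2 \cup K_3 \cup K_4$ follows by matching the three arcs of $\gamma$ to $K_2$, $K_3$, $K_4$ respectively, and both uniqueness of $\mu$ and pairwise disjointness of the $K_i$ follow from the strict monotonicity: if $p \in \partial C_{\mu,\sigma}$ and $y_0 \neq 0$ satisfies $\overline m_{\mu,\sigma}(y_0) = p \cdot y_0$, then for any $(\nu,\tau) \in \A'$ with $\tau\nu > \sigma\mu$ we have $\overline m_{\nu,\tau}(y_0) \geq p\cdot y_0 + c|y_0|$, so $p$ is strictly interior to $C_{\nu,\tau}$ (and symmetrically, $p \notin C_{\nu,\tau}$ when $\tau\nu < \sigma\mu$).

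Finally, the formula for $\overline H$ follows by computing $\overline H^-(p)$ and $\overline H^+(p)$ from their definitions case by case. For $p \in K_1$ I would split on the sign of $\kappa$: if $\kappa \geq 0$ then $(\mu_*,\sigma_*) = (\kappa,-1)$, so $p \in C_{\mu,-1}$ for every $\mu \in [0,\mu_*)$ while admissibility fails for $\mu > \mu_*$, giving $\overline H^-(p) = \mu_*$; since $p \in C_{0,1}$ also, $\overline H^+(p) = 0$, and $\overline H(p) = \mu_*$. If $\kappa < 0$, no $(\mu,-1)$ lies in $\A$, so $\overline H^-(p) = -\infty$, while $\overline H^+(p) = 0 = \mu_*$. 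For $p \in \partial\overline m_{\mu,\sigma}(\partial B_1)$ with $(\mu,\sigma) \in \A'$, the same strict-monotonicity dichotomy reads off $\overline H^-(p) = \mu$ when $\sigma = -1$, $\overline H^+(p) = \mu$ when $\sigma = 1$, and $\overline H(p) = 0 = \mu$ when $\mu = 0$. The main obstacle is the careful handling of the endpoint $(\mu_*,\sigma_*)$, where Lemma~\ref{l.contmono}'s continuity is not directly available, together with the case split on the sign of $\kappa$ for identifying $\overline H$ on $K_1$.
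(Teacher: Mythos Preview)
Your proposal is correct and follows essentially the same approach as the paper: both arguments parametrize $\A'\setminus\{(\mu_*,\sigma_*)\}$ as a single monotone path, use the continuity and monotonicity of $\overline m_{\mu,\sigma}$ from Lemma~\ref{l.contmono} to find, for each $p\notin K_1$, the first point along the path at which the plane $y\mapsto p\cdot y$ touches the graph of $\overline m_{\mu,\sigma}$, and invoke the strict monotonicity~\eqref{e.mono-strict} for uniqueness of~$\mu$. Your support-function reformulation via the convex bodies $C_{\mu,\sigma}=\partial\overline m_{\mu,\sigma}(0)$ and the explicit case split on $\kappa$ for the formula~\eqref{e.Hbargoodform} supply more detail than the paper's proof, but the substance is the same.
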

\begin{proof}
We move along the path $\A'$ starting at $(\mu_*,\sigma_*)$. If $\mu_*>0$ and hence $\sigma_*=-1$, then we move in straight line segments from $(\mu_*,-1)$ to $(0,-1)$ to $(0,1)$ to $(\infty,1)$; otherwise, if $\mu_*=0$, then we move first from $(0,\sigma_*)$ to $(0,1)$ and then to $(\infty,1)$. 

By Lemma~\ref{l.contmono}, the graph of the positively homogeneous, convex function $\overline m_{\mu,\sigma}$ is continuous and increasing as we move along the path. Therefore, given $p\in \Rd\setminus K_1$, we can stop at the first point $(\mu,\sigma)\in \A'\setminus \{ (\mu_*,\sigma_*) \}$ in the path at which the graph of $\overline m_{\mu,\sigma}$ is tangent to that of the plane $y\mapsto p\cdot y$. Indeed, $p\not\in K_1$ ensures that the plane $p\cdot y$ is not below the graph of $\overline m_{\mu,\sigma}$ for every $(\mu,\sigma)\in \A'\setminus \{ (\mu_*,\sigma_*) \}$, and this point must be reached at or before $((|p|^2-1)^2,1)$, by the estimate~\eqref{e.mmuyz.+1}. The uniqueness of $\mu$ follows from the last statement of Lemma~\ref{l.contmono}. This completes the proof of the first statement. The formula~\eqref{e.Hbargoodform} is then immediate from the definition of $\overline H$ and Lemma~\ref{l.contmono}. 
\end{proof}

\section{Proof of homogenization}
\label{s.homog}

We consider, for each $p\in \Rd$ and $\delta > 0$, the \emph{approximate cell problem}
\begin{equation} \label{e.appcell}
\delta v^\delta + \left( \left| p+Dv^\delta \right|^2 - 1 \right)^2 - V(y) = 0 \quad \mbox{in} \ \Rd. 
\end{equation}
It is classical that, for every $p\in\Rd$ and $\delta > 0$, there exists a unique viscosity solution~$v^\delta=v^\delta(\cdot,p) \in C(\Rd)$ of~\eqref{e.appcell} subject to the growth condition
\begin{equation*} \label{}
\limsup_{|y| \to \infty} \frac{v^\delta(y)}{|y|} = 0. 
\end{equation*}
In fact, by comparing $v^\delta(\cdot,p)$ to constant functions we immediately obtain that $v^\delta(\cdot,p)$ is bounded and 
\begin{equation} \label{e.dvdbnd}
-\frac1\delta \left( (|p|^2 -1)^2 - \inf_{\Rd} V(y) \right) \leq v^\delta(\cdot,p) \leq -\frac1\delta \left( (|p|^2 -1)^2 - \sup_{\Rd} V(y) \right).
\end{equation}
It follows from~\eqref{e.dvdbnd} and the coercivity of the equation that $v^\delta$ is Lipschitz and, for $C>0$ depending only on an upper bound for $|p|$ and $\sup_{\Rd} V$, we have
\begin{equation} \label{e.dvdLip}
\sup_{\Rd} \left|Dv^\delta(\cdot,p)\right| \leq C
\end{equation}
Using~\eqref{e.dvdLip} and comparing $v^\delta(\cdot,p)$ to $v^\delta(\cdot,q) \pm C\delta^{-1}|p-q|$, we obtain, for a constant $C> 0$ depending only on an upper bound for $\max\{ |p|,|q| \}$ and $\sup_{\Rd} V$, the estimate
\begin{equation} \label{e.dvdcontp}
\sup_{\Rd} \left| \delta v^\delta(\cdot,p) - \delta v^\delta(\cdot,q) \right|  \leq C|p-q|. 
\end{equation}

\smallskip

By the perturbed test function method, Theorem~\ref{t.main} can be reduced to the following proposition. 

\begin{proposition}
\label{p.cell}
\begin{equation} \label{e.cell}
\P \left[ \forall R>0, \ \limsup_{\delta \to 0} \sup_{p\in B_R} \sup_{B_{R/\delta}} \left| \delta v^\delta(\cdot,p) + \overline H(p) \right| =0 \right] = 1. 
\end{equation}
\end{proposition}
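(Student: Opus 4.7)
The plan is to reduce Proposition~\ref{p.cell} to a deterministic comparison problem via the full-measure event from Proposition~\ref{p.shape}, and then pin down the limit of $\delta v^\delta$ by constructing sub- and super-solutions of~\eqref{e.appcell} from the maximal subsolutions $m_{\mu,\sigma}(\cdot,0)$, treating the regions $K_1,K_2,K_3,K_4$ of Proposition~\ref{p.partition} separately.

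\emph{Reductions.} The uniform Lipschitz bound~\eqref{e.dvdLip} and the continuity in $p$ from~\eqref{e.dvdcontp} make $\delta v^\delta(\cdot,p)+\overline H(p)$ equi-continuous jointly in $(y,p)$ on $B_{R/\delta}\times B_R$ (after noting that $\overline H$ is continuous). A countable-dense-subset argument together with a union bound over rational $p$ then reduces the proposition to: for each fixed $p\in\Rd$,
\begin{equation*}
\P\bigg[\limsup_{\delta\to 0}\sup_{y\in B_{R/\delta}}\big|\delta v^\delta(y,p)+\overline H(p)\big|=0\bigg]=1.
\end{equation*}
I would then work on the almost-sure event on which Proposition~\ref{p.shape} holds for all $(\mu,\sigma)\in\A$ simultaneously and $\inf V=0,\sup V=\overline v$; the remainder of the proof is deterministic.

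\emph{Generic case $p\notin K_1$.} By Proposition~\ref{p.partition}, there is a unique $(\mu,\sigma)\in\A'\setminus\{(\mu_*,\sigma_*)\}$ with $\overline H(p)=\mu$ and $p\in\partial\overline m_{\mu,\sigma}(\partial B_1)$, so $\overline m_{\mu,\sigma}(y)\geq p\cdot y$ with equality along some tangent ray. For the \emph{upper bound} on $\delta v^\delta$, the natural candidate supersolution is
\[
W(y)\ :=\ -\frac{\mu}{\delta}+m_{\mu,\sigma}(y,0)-p\cdot y.
\]
The supersolution property~\eqref{e.super} of $m_{\mu,\sigma}(\cdot,0)$ for the cousin equation~\eqref{e.cousin}, combined with $\sigma^2=1$ in the cases $K_2$ and $K_4$ or with a small parameter perturbation using~\eqref{e.mono-strict} in $K_3$ (where $\sigma^2<1$), gives $(|p+DW|^2-1)^2-V\geq\mu$ in $\Rd\setminus\{0\}$; tangency and Proposition~\ref{p.shape} force $m_{\mu,\sigma}(y,0)-p\cdot y\geq -o(|y|)$, so $W$ is asymptotically a supersolution of~\eqref{e.appcell} on an annular domain. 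Comparison on $B_{R/\delta}\setminus B_r$, with the boundary contribution on $\partial B_r$ absorbed via the Lipschitz bound~\eqref{e.dvdLip} and Lemma~\ref{l.easybnds}, then yields $\delta v^\delta\leq -\mu+o(1)$. The \emph{lower bound} is proved dually, using the subsolution property~\eqref{e.sub} of $-m_{\mu,\sigma}(\cdot,0)$ to build a subsolution of~\eqref{e.appcell}.

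\emph{Flat-hilltop case $p\in K_1$.} Here $\overline H(p)=\mu_*$ and $\overline m_{\mu,\sigma}(y)>p\cdot y$ strictly for every $y\neq 0$ and every $(\mu,\sigma)\in\A'\setminus\{(\mu_*,\sigma_*)\}$, giving an extra margin that makes the upper bound almost immediate via the construction above applied to $(\mu,\sigma)$ just past $(\mu_*,\sigma_*)$ on the path in $\A'$. The lower bound is more delicate: no single sub-equation is tangent to $p$, so the intermediate sub-equations with $\mu=0$ and $-1<\sigma<1$, whose shapes $\overline m_{0,\sigma}$ continuously interpolate between the inner ($\sigma=-1$) and outer ($\sigma=+1$) branches, must be used to rule out $\delta v^\delta<-\mu_*-o(1)$. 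This is exactly the role of the ``connecting'' intermediate $\sigma$-range highlighted in the introduction.

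\emph{Main obstacle.} The central technical difficulty is that the supersolution property~\eqref{e.super} of $m_{\mu,\sigma}(\cdot,z)$ holds only on $\Rd\setminus\{z\}$, so all comparisons must be performed on punctured or annular domains and the boundary contributions carefully absorbed; the small parameter perturbation guaranteed by~\eqref{e.mono-strict} is what produces strict sub-/supersolutions with enough slack to absorb these corrections. A secondary obstacle is the flat-hilltop region $K_1$, which defeats the naive tangent-sub-equation argument and requires the novel use of intermediate $-1<\sigma<1$ to close the lower bound.
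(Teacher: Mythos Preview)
Your overall architecture---reduce to fixed $p$, then use Proposition~\ref{p.partition} to match each $p$ with a sub-equation and compare $v^\delta$ against $m_{\mu,\sigma}$---agrees with the paper. But the comparison scheme you outline has real gaps.

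\textbf{The vertex cannot sit at the origin.} Your candidate $W(y)=-\mu/\delta+m_{\mu,\sigma}(y,0)-p\cdot y$ is a supersolution of the cousin equation only on $\Rd\setminus\{0\}$, and the annular comparison you propose is circular: on the inner boundary $\partial B_r$ you would need $v^\delta\le W+o(1/\delta)$, i.e.\ $v^\delta(0)\le -\mu/\delta+o(1/\delta)$, which is exactly the conclusion. The paper avoids this by (i) perturbing $v^\delta$ to a strict sub- (or super-) solution on a sublevel set $U\subseteq B_{s/\delta}$ of the perturbation itself, so that the boundary values are fixed by construction, and (ii) placing the vertex of $m_{\mu,\sigma}$ at a point $\pm se/\delta\notin U$ whenever the global supersolution property~\eqref{e.super.global} is unavailable (Lemmas~\ref{l.below.+1} and~\ref{l.above}). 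Only for $\sigma=-1$ (region $K_2$) can the vertex stay at $0$, via~\eqref{e.super.global}.

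\textbf{The bound $\liminf(-\delta v^\delta)\ge 0$ on $K_3$ does not come from $m_{0,\sigma}$.} When $\sigma^2<1$, the supersolution property~\eqref{e.super} only yields $(|Dm_{0,\sigma}|^2-1)^2\ge\sigma^2 V$, which is strictly weaker than $\ge V$; no perturbation via~\eqref{e.mono-strict} repairs this, because moving $(\mu,\sigma)$ within $\A'$ either keeps $\mu=0$ (same problem) or moves to $\sigma=\pm1$ (loses tangency to $p$). The paper instead proves $\liminf(-\delta v^\delta)\ge 0$ for \emph{all} $p$ by an elementary argument independent of $m_{\mu,\sigma}$: touch $v^\delta$ from above near a point where $V$ is close to $\inf V=0$ (Lemma~\ref{l.below-valley}).

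\textbf{The hilltop bound $\limsup(-\delta v^\delta)\le\mu_*$ needs a specific vertex, not generic intermediate $\sigma$.} For $p\in K_1$ the subsolution $-m_{0,0}(y,z)=-|y-z|$ satisfies $(|D|^2-1)^2\le 0$ away from $z$, but at the vertex a test function $\phi$ touching from above only gives $|p+D\phi(z)|\le 1$, hence $(|p+D\phi(z)|^2-1)^2-V(z)\le 1-V(z)$. The paper chooses $z=y_\theta$ with $V(y_\theta)\ge\overline v-\eta$, so that $1-V(y_\theta)\le\kappa+\eta\le\mu_*+\theta/4$, which is precisely what is needed (Lemma~\ref{l.above-hilltop}). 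Your description (``intermediate sub-equations with $-1<\sigma<1$ interpolate'') misses this: the intermediate $\sigma$ are used in the paper only in Lemma~\ref{l.above} for $p\in K_3$, not for $K_1$.
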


We omit the demonstration that Proposition~\ref{p.cell} implies Theorem~\ref{t.main}, since it is classical and can also be obtained for example by applying~\cite[Lemma 7.1]{ACS}. The argument for Proposition~\ref{p.cell} is broken into the following five lemmas. Recall that $\A$ is the set of admissible parameters $(\mu,\sigma)$ defined in~\eqref{e.admindef}.

\begin{lemma}
\label{l.below.+1}
\begin{equation*} \label{}
\P \bigg[ \forall (\mu,1) \in \mathcal A,\ \forall p\in \partial \overline m_{\mu,1}(\partial B_1), \ \liminf_{\delta \to 0} \, -\delta v^\delta (0,p) \geq \mu \bigg]  = 1.
\end{equation*}
\end{lemma}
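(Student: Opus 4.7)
The argument constructs a viscosity supersolution of~\eqref{e.appcell} from the maximal subsolution $m_{\mu',1}$ for a parameter $\mu'$ slightly larger than~$\mu$, and deduces $v^\delta(0)\leq w^\delta(0)$ via comparison. Fix $\eta>0$ and pick $\mu'\in(\mu,\mu+\eta/2)$ with $(\mu',1)\in\A$. Choose $y_0\in\partial B_1$ with $p\in\partial\overline m_{\mu,1}(y_0)$, so that $\overline m_{\mu,1}(y)\geq p\cdot y$ everywhere with equality at $y_0$; the strict monotonicity~\eqref{e.mono-strict} then provides $c=c(\mu'-\mu)>0$ with $\overline m_{\mu',1}(y)\geq p\cdot y+c|y|$ for all $y\in\Rd$. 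I will work on the full-probability event where the conclusion of Proposition~\ref{p.shape} holds.

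Set $z_\delta:=-y_0/\delta$ and define
\[
w^\delta(y):=m_{\mu',1}(y,z_\delta)-p\cdot(y-z_\delta)-(\mu'-\eta)/\delta.
\]
The supersolution property~\eqref{e.super} for $m_{\mu',1}(\cdot,z_\delta)$, sharpened to $|Dm_{\mu',1}|^2\geq 1+\sqrt{\mu'+V}$ at any smooth test function touching from below (as in the proof of Proposition~\ref{p.mmurho}), shows that $(|p+Dw^\delta|^2-1)^2-V\geq\mu'$ in $\Rd\setminus\{z_\delta\}$. Consequently
\[
\delta w^\delta+(|p+Dw^\delta|^2-1)^2-V\;\geq\;\delta\bigl[m_{\mu',1}(y,z_\delta)-p\cdot(y-z_\delta)\bigr]+\eta.
\]
Applying Proposition~\ref{p.shape} with $t=1/\delta$ and the bound $\overline m_{\mu',1}\geq p\cdot(\cdot)$, the bracket is at least $-\eta_R(\delta)/\delta$ uniformly for $y\in z_\delta+B_{R/\delta}$, where $\eta_R(\delta)\to 0$ as $\delta\to 0$. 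Hence $w^\delta$ is a viscosity supersolution of~\eqref{e.appcell} on $(z_\delta+B_{R/\delta})\setminus\{z_\delta\}$ for all sufficiently small $\delta$. The same ergodic estimate gives $\delta w^\delta(0)\to-(\mu'-\eta)+\Delta(\mu')$, where $\Delta(\mu'):=\overline m_{\mu',1}(y_0)-p\cdot y_0\geq 0$ vanishes as $\mu'\to\mu$ by~\eqref{e.cont}; and the strict lower bound $m_{\mu',1}(y,z_\delta)-p\cdot(y-z_\delta)\geq c|y-z_\delta|-\eta_R(\delta)/\delta$ forces $w^\delta\gg\|v^\delta\|_\infty$ on $\partial B_{R/\delta}(z_\delta)$ once $R$ is chosen large (independently of~$\delta$).

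The viscosity comparison principle applied to $v^\delta$ and $w^\delta$ on $(z_\delta+B_{R/\delta})\setminus\{z_\delta\}$ then yields $v^\delta(0)\leq w^\delta(0)$, so that
\[
-\delta v^\delta(0)\;\geq\;\mu'-\eta-\Delta(\mu')-o(1)\qquad\mbox{as $\delta\to 0$.}
\]
Passing to $\liminf_{\delta\to 0}$, then $\mu'\to\mu^+$ (so $\Delta(\mu')\to 0$), and finally $\eta\to 0^+$ produces $\liminf_{\delta\to 0}-\delta v^\delta(0)\geq\mu$; the almost-sure statement of the lemma follows by intersecting over a countable dense family of admissible $(\mu,p)$. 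The technical crux is the comparison across the singular point $z_\delta$, where $w^\delta$ fails to be a supersolution. This is handled by a doubling-of-variables argument between $v^\delta$ and $w^\delta$: a maximizer $(y_\alpha,z_\alpha)$ of $v^\delta(y)-w^\delta(z)-(\alpha/2)|y-z|^2$ with $z_\alpha\neq z_\delta$ satisfies the usual coupled sub/supersolution inequalities that close up to $\delta v^\delta(y_\alpha)+\mu'+V(z_\alpha)-V(y_\alpha)\leq 0$, while the coercivity of $H(q)=(|q|^2-1)^2$ together with the Lipschitz bound on $m_{\mu',1}(\cdot,z_\delta)$ constrains the admissible test-gradients in the degenerate case $z_\alpha\to z_\delta$, following ideas from~\cite{AS2}.
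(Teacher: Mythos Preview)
Your overall strategy---comparing $v^\delta$ with a function built from $m_{\mu',1}(\cdot,z_\delta)$ via the supersolution property~\eqref{e.super}, and passing to the limit through Proposition~\ref{p.shape}---is precisely the paper's idea, and the peripheral ingredients (the choice of $e=y_0\in\partial B_1$ with $p\in\partial\overline m_{\mu,1}(y_0)$, the continuity/monotonicity in $\mu$, the countable-intersection step) all match. The substantive difference is in how the vertex of $m_{\mu',1}(\cdot,z_\delta)$ is handled, and here your argument has a real gap.

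You place the singular point $z_\delta$ \emph{inside} the comparison domain $B_{R/\delta}(z_\delta)$ and then propose to patch the failure of the supersolution property at $z_\delta$ by doubling variables. The sketch you give does not close: if the penalized maximizer $(y_\alpha,z_\alpha)$ has $z_\alpha=z_\delta$, the only information available on the $w^\delta$ side is the Lipschitz bound on $m_{\mu',1}(\cdot,z_\delta)$, which forces
\[
\bigl|p+\alpha(y_\alpha-z_\delta)\bigr|^2\;\le\;1+(\mu'+\overline v)^{1/2},
\]
and hence $(|p+\alpha(y_\alpha-z_\delta)|^2-1)^2\le\mu'+\overline v$. This is the \emph{wrong} direction of inequality to combine with the subsolution line $\delta v^\delta(y_\alpha)+(|p+\alpha(y_\alpha-z_\delta)|^2-1)^2\le V(y_\alpha)$ and derive a contradiction. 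The coercivity of $H(q)=(|q|^2-1)^2$ does not help, because the problematic test gradients are small rather than large. So the ``degenerate case $z_\alpha\to z_\delta$'' is not disposed of.

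The paper sidesteps this entirely by reversing the roles. Rather than build a supersolution of the full cell problem~\eqref{e.appcell}, it perturbs $v^\delta$ by subtracting a cone $c\theta\bigl((1+|y|^2)^{1/2}-1\bigr)$, which localizes the comparison to a bounded open set $U\subseteq B_{s/\delta}$ on which the perturbed $v^\delta$ is a strict subsolution of $(|p+Dw|^2-1)^2-V\le\mu-\tfrac\theta2$. The vertex of $m_{\mu,1}$ is then placed at $-se/\delta\in\partial B_{s/\delta}$, hence \emph{outside} $U$, so that~\eqref{e.super} applies throughout $U$ and the ordinary comparison principle yields the estimate directly, with no singular point to worry about. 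If you want to salvage your version, the simplest fix is exactly this: choose the vertex on the far side of a ball that strictly contains the comparison domain.
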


\begin{lemma}
\label{l.below.-1}
\begin{equation*} \label{}
\P \bigg[ \forall (\mu,-1) \in \mathcal A,\ \forall p\in \partial \overline m_{\mu,-1}(0), \ \liminf_{\delta \to 0} \, -\delta v^\delta (0,p) \geq \mu \bigg]  = 1.
\end{equation*}
\end{lemma}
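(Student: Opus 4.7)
The plan is to prove, for each fixed $\eta \in (0, \mu)$, that $\liminf_{\delta \to 0} -\delta v^\delta(0, p) \geq \mu - \eta$, and then send $\eta \to 0^+$. The strategy is to build an explicit viscosity supersolution $w = w^{\eta, \delta}$ of~\eqref{e.appcell} which equals $-(\mu - \eta)/\delta + O(1)$ at the origin, and to use comparison with $v^\delta$ to deduce $v^\delta(0, p) \leq w(0)$. The boundary case $\mu = 0$ is handled by a limiting argument.

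Set $\tilde\mu := \mu - \eta > 0$, so $(\tilde\mu, -1) \in \A$. By the strict monotonicity in Lemma~\ref{l.contmono} (with parameters $(\mu, -1)$ and $(\tilde\mu, -1)$; note $-\mu < -\tilde\mu$), there is $c = c(\eta) > 0$ with
\begin{equation*}
\overline m_{\tilde\mu, -1}(y) \geq \overline m_{\mu, -1}(y) + c|y| \geq p\cdot y + c|y|,
\end{equation*}
where the last step uses $p \in \partial \overline m_{\mu,-1}(0)$. By Proposition~\ref{p.shape}, on a probability-one event there is $R_0 = R_0(V, \eta) > 0$ such that $m_{\tilde\mu, -1}(y, 0) \geq \overline m_{\tilde\mu, -1}(y) - (c/2)|y|$ for $|y| \geq R_0$, hence $m_{\tilde\mu, -1}(y, 0) - p \cdot y \geq (c/2)|y|$ there. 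On $B_{R_0}$, the Lipschitz estimate from Lemma~\ref{l.easybnds} gives $|m_{\tilde\mu,-1}(y, 0) - p\cdot y| \leq C R_0$ for some $C = C(|p|)$.

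Define $w(y) := -(\mu - \eta)/\delta + CR_0 + m_{\tilde\mu,-1}(y, 0) - p\cdot y$. Since $\sigma = -1 \leq 0$, the \emph{global} supersolution property~\eqref{e.super.global} of $m_{\tilde\mu,-1}(\cdot, 0)$ applies at every point, including the origin. Since $p + Dw = Dm_{\tilde\mu,-1}$, a direct verification at any viscosity test point $y_0$ (transferring the test function via $\phi \mapsto \phi + p\cdot y$) yields
\begin{equation*}
\delta w(y_0) + \bigl(|p + Dw(y_0)|^2 - 1\bigr)^2 - V(y_0) \geq \delta\bigl( CR_0 + m_{\tilde\mu,-1}(y_0, 0) - p\cdot y_0 \bigr) \geq 0,
\end{equation*}
so $w$ is a viscosity supersolution of~\eqref{e.appcell} on all of $\Rd$. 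Moreover $w$ grows at infinity at linear rate at least $c/2$, while $|v^\delta| = O(1/\delta)$ by~\eqref{e.dvdbnd}; thus $w \geq v^\delta$ on $\partial B_R$ for $R = O(1/\delta)$ large enough, and the standard viscosity comparison on $\overline B_R$ gives $w \geq v^\delta$ inside. Evaluating at $y = 0$ with $m_{\tilde\mu,-1}(0, 0) = 0$ gives $v^\delta(0, p) \leq -(\mu - \eta)/\delta + CR_0$, hence $-\delta v^\delta(0, p) \geq (\mu - \eta) - \delta CR_0$. Sending $\delta \to 0$ and then $\eta \to 0^+$ finishes the proof.

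The main obstacle is upgrading the subdifferential inequality $\overline m_{\mu,-1}(y) \geq p \cdot y$ into a linear lower bound on $m_{\tilde\mu,-1}(y, 0) - p \cdot y$ that grows to $+\infty$. This is essential both for checking that $w$ remains a supersolution across the origin and for driving the comparison on an unbounded domain. The strict monotonicity in $\mu$ provided by Lemma~\ref{l.contmono} supplies this gap, and the hypothesis $\sigma = -1 \leq 0$ is precisely what activates the \emph{global} supersolution property~\eqref{e.super.global} at the origin; this is the structural reason the argument is tailored to the ``hilltop'' region $K_1$ and its $\sigma = -1$ parametrization.
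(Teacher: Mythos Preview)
Your argument for $\mu>0$ is correct and takes a genuinely different, more direct route than the paper. The paper argues by contradiction: assuming $\theta:=\mu+\delta v^\delta(0,p)>0$, it perturbs $v^\delta$ into a strict subsolution $w$ of $\bigl(|p+Dw|^2-1\bigr)^2-V\le \mu-\tfrac\theta2$ on a bounded set $U\subseteq B_{s/\delta}$, compares with the supersolution $\tilde m(y)=m_{\mu,-1}(y,0)-p\cdot y$ (using the global supersolution property~\eqref{e.super.global}), and obtains an inequality that is incompatible with Proposition~\ref{p.shape}. You instead build an explicit global supersolution of~\eqref{e.appcell} out of $m_{\tilde\mu,-1}$ with $\tilde\mu=\mu-\eta$, using the strict monotonicity of Lemma~\ref{l.contmono} to manufacture the linear slack $c|y|$ that both keeps $\delta w\ge -\tilde\mu$ and forces $w\ge v^\delta$ on $\partial B_R$ for $R\sim 1/\delta$. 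Both proofs hinge on the same structural fact, namely that $\sigma=-1\le 0$ activates~\eqref{e.super.global} \emph{across the vertex}, but the paper's version never leaves the level $\mu$ and packages the quantitative control into Proposition~\ref{p.shape}, whereas yours trades an $\eta$--loss for a clean direct comparison. Your approach is arguably more transparent; the paper's avoids the extra limit $\eta\to 0$.

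One small gap: your remark that ``the boundary case $\mu=0$ is handled by a limiting argument'' is not substantiated. Your main construction requires $\tilde\mu=\mu-\eta>0$, and there is no obvious way to send $\mu\downarrow 0$ while keeping $p\in\partial\overline m_{\mu,-1}(0)$ (since $\partial\overline m_{\mu,-1}(0)$ shrinks as $\mu$ increases, not the other way around). The clean fix is simply to observe that for $\mu=0$ the desired inequality is $\liminf_{\delta\to 0}\bigl(-\delta v^\delta(0,p)\bigr)\ge 0$, which holds for \emph{every} $p$ by the elementary barrier argument of Lemma~\ref{l.below-valley} (pick $y_\theta$ with $V(y_\theta)\le\theta$, touch $v^\delta$ from above near $y_\theta$, and use~\eqref{e.dvdLip}); no limiting in $\mu$ is needed.
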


\begin{lemma}
\label{l.below-valley}
\begin{equation*} \label{}
\P \bigg[ \forall p\in \Rd, \ \liminf_{\delta \to 0} \, -\delta v^\delta (0,p) \geq 0  \bigg]  = 1.
\end{equation*}
\end{lemma}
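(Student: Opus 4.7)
The plan is to exploit the trivial pointwise bound $\delta v^\delta(y,p) \leq V(y)$, which follows immediately from~\eqref{e.appcell} together with the nonnegativity of $(|p+Dv^\delta|^2-1)^2$: since $v^\delta(\cdot,p)$ is Lipschitz it is differentiable almost everywhere, the equation is satisfied classically at every differentiability point, and continuity of both sides then extends the inequality to every $y\in\Rd$. The idea is to apply this bound at a point $y_\eta$ where $V(y_\eta)$ is small, and then transport the information back to the origin via the Lipschitz estimate~\eqref{e.dvdLip}.

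More precisely, by the normalization $\underline v = 0$ combined with Lemma~\ref{l.detm}, the event
$$
\Omega_0 := \left\{ V \in \Omega \,:\, \inf_{\Rd} V = 0,\ \sup_{\Rd} V = \overline v \right\}
$$
has $\P[\Omega_0] = 1$, and I would work on this event throughout. For each $\eta > 0$ one can then select some $y_\eta \in \Rd$ with $V(y_\eta) < \eta$; the crucial feature of this choice is that it depends only on $V$ and $\eta$, not on $\delta$ or $p$. Given $R > 0$, the estimate~\eqref{e.dvdLip} supplies a deterministic constant $C_R>0$ such that $v^\delta(\cdot,p)$ is $C_R$-Lipschitz for all $p \in B_R$ and all $\delta > 0$. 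Chaining the pointwise bound with the Lipschitz bound then yields, for every $p \in B_R$,
$$
\delta v^\delta(0,p) \,\leq\, \delta v^\delta(y_\eta,p) + C_R\,\delta\,|y_\eta| \,\leq\, V(y_\eta) + C_R\,\delta\,|y_\eta| \,<\, \eta + C_R\,\delta\,|y_\eta|.
$$
Since $|y_\eta|$ is independent of $\delta$, sending first $\delta \to 0$ and then $\eta \to 0$ delivers $\limsup_{\delta \to 0} \delta v^\delta(0,p) \leq 0$, which is precisely the claim. Because the family $\{y_\eta\}_{\eta>0}$ is chosen without reference to $p$, the conclusion holds simultaneously for every $p \in \Rd$ on the single full-measure event $\Omega_0$.

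I do not anticipate a serious obstacle. The only mildly subtle point is the pointwise reduction $\delta v^\delta \leq V$, which is a standard fact for Lipschitz viscosity subsolutions of continuous first-order equations but deserves a short justification. In contrast to the companion Lemmas~\ref{l.below.+1} and~\ref{l.below.-1}, this statement does not interact with the maximal subsolutions of~\eqref{e.subpde}; it simply captures the baseline lower bound $\overline H \geq 0$ forced by the nonnegativity of the Hamiltonian $(|p|^2-1)^2$.
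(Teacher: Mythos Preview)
Your proposal is correct and follows essentially the same approach as the paper: pick a point $y_\eta$ where $V$ is small, establish $\delta v^\delta(y_\eta,p)\leq V(y_\eta)$ there, and transport to the origin via the Lipschitz estimate~\eqref{e.dvdLip}. The only cosmetic difference is in justifying the pointwise bound $\delta v^\delta \leq V$: the paper touches $v^\delta$ from above with a smooth barrier blowing up on $\partial B_r(y_\theta)$ and reads off the subsolution inequality at the interior maximum, whereas you invoke the a.e.\ validity of the equation for Lipschitz viscosity solutions together with continuity---both routes are standard and yield the same conclusion.
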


\begin{lemma}
\label{l.above}
\begin{equation*} \label{}
\P \bigg[ \forall (\mu,\sigma) \in \mathcal A, \  \forall p\in \partial \overline m_{\mu,\sigma}(\partial B_1), \ \limsup_{\delta \to 0} \, -\delta v^\delta (0,p) \leq \mu \bigg]  = 1.
\end{equation*}
\end{lemma}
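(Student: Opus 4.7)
The plan is to construct, for each $R > 0$ and small $\delta > 0$, an approximate subsolution $\hat w^{\delta,R}$ of the approximate cell problem~\eqref{e.appcell} whose value at the origin satisfies $\delta \hat w^{\delta,R}(0) \to -\mu$ as $\delta \to 0$, and then conclude by comparison with $v^\delta$. The crucial preliminary observation, which exploits~\eqref{e.mmusym}, is that $\overline m_{\mu,\sigma}$ is an even function: passing to the limit in Proposition~\ref{p.shape},
\begin{equation*}
\overline m_{\mu,\sigma}(y - z) = \lim_{t\to\infty} \frac{m_{\mu,\sigma}(ty, tz)}{t} = \lim_{t\to\infty} \frac{m_{\mu,\sigma}(tz, ty)}{t} = \overline m_{\mu,\sigma}(z - y).
\end{equation*}
Combined with the subgradient condition $\overline m_{\mu,\sigma}(\xi) \geq p\cdot \xi$ implied by $p \in \partial\overline m_{\mu,\sigma}(y_0)$ (via $1$-homogeneity, which forces $\overline m_{\mu,\sigma}(y_0) = p\cdot y_0$), evenness yields the symmetrized bound $\overline m_{\mu,\sigma}(\xi) \geq |p\cdot\xi|$ for all $\xi \in \Rd$.

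Fix $y_0 \in \partial B_1$ with $p \in \partial\overline m_{\mu,\sigma}(y_0)$, set $z_\delta := Ry_0/\delta$, and define
\begin{equation*}
\hat w^{\delta,R}(y) := -m_{\mu,\sigma}(y, z_\delta) - p\cdot(y - z_\delta) - \frac{\mu + \eta_\delta}{\delta},
\end{equation*}
where $\eta_\delta := \sup_{x \in B_{2R}}|\delta m_{\mu,\sigma}(x/\delta, z_\delta) - \overline m_{\mu,\sigma}(x - Ry_0)|$ satisfies $\eta_\delta \to 0$ almost surely by Proposition~\ref{p.shape}. I would then verify that $\hat w^{\delta,R}$ is a subsolution of~\eqref{e.appcell} on $\{y : \delta y \in B_R\}$, which excludes the singular point $z_\delta$ since $|\delta z_\delta| = R$. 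At a local maximum $y_*$ of $\hat w^{\delta,R} - \phi$, property~\eqref{e.sub} applied with the shifted test function $\psi = \phi + p\cdot y$ yields $(|p + D\phi(y_*)|^2 - 1)^2 - V(y_*) \leq \sigma^2\mu \leq \mu$, using $\sigma^2 \leq 1$ and $V \geq 0$. In the macroscopic variable $x = \delta y_*$, the symmetrized bound gives $\overline m_{\mu,\sigma}(x - Ry_0) \geq -p\cdot(x - Ry_0)$; combined with the definition of $\eta_\delta$, this shows $\hat w^{\delta,R}(y_*) \leq -\mu/\delta$, closing the subsolution inequality. Moreover, using~\eqref{e.mmusym} and Proposition~\ref{p.shape} at $y = 0$, $\delta m_{\mu,\sigma}(z_\delta, 0) \to \overline m_{\mu,\sigma}(Ry_0) = Rp\cdot y_0$, so that $\delta\hat w^{\delta,R}(0) \to -\mu$.

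The final step and main technical obstacle is a comparison argument to conclude $\hat w^{\delta,R}(0) \leq v^\delta(0, p)$, which together with taking $\delta \to 0$ yields the desired bound. The subtlety is that $\hat w^{\delta,R}$ has linear growth at infinity in some directions (with a coefficient dictated by the Lipschitz bounds of Lemma~\ref{l.easybnds} relative to $|p|$), while $v^\delta$ is bounded uniformly by~\eqref{e.dvdbnd}, so the standard global viscosity comparison cannot be applied directly. I would resolve this by performing a local comparison on the macroscopic ball $B_R$ (microscopically on $B_{R/\delta}$), using~\eqref{e.dvdLip} together with Lemma~\ref{l.easybnds} to estimate the boundary data and the coercivity of $(|\cdot|^2 - 1)^2$ to handle any interior maximum of $\hat w^{\delta,R} - v^\delta$ through subtraction of the subsolution and supersolution inequalities at that maximum. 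Sending $\delta \to 0$, and if necessary taking $R$ sufficiently large, then completes the proof.
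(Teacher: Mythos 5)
Your construction of $\hat w^{\delta,R}$ and the verification that it is a subsolution of~\eqref{e.appcell} in $B_{R/\delta}$ are essentially correct: the evenness of $\overline m_{\mu,\sigma}$ coming from~\eqref{e.mmusym}, the resulting bound $\overline m_{\mu,\sigma}(\xi)\geq |p\cdot\xi|$, the use of~\eqref{e.sub} together with $\sigma^2\leq 1$ and $V\geq 0$, and the convergence $\delta\hat w^{\delta,R}(0)\to-\mu$ via Proposition~\ref{p.shape} all go through. The gap is in the final comparison step, and it is not the one you diagnose. Since you propose to compare on the bounded set $B_{R/\delta}$, growth at infinity is irrelevant; what you actually need is the boundary inequality $\hat w^{\delta,R}\leq v^\delta(\cdot,p)$ on $\partial B_{R/\delta}$ (or that the maximum of $\hat w^{\delta,R}-v^\delta$ is attained in the interior, where subtracting the two $\delta$-terms gives nonpositivity). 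But near the vertex $z_\delta=Ry_0/\delta\in\partial B_{R/\delta}$ one has $\delta \hat w^{\delta,R}\approx-\mu-\eta_\delta$, while the only a priori lower bound on $\delta v^\delta$ available from~\eqref{e.dvdbnd},~\eqref{e.dvdLip} or Lemma~\ref{l.easybnds} is $\delta v^\delta\geq -(|p|^2-1)^2+\inf_{\Rd}V$; by Lemma~\ref{l.coercivity}, $\mu=\overline H(p)$ can be as small as $(|p|^2-1)^2-\overline v$, so $-\mu$ may lie strictly above that bound. Hence the boundary ordering you need near the vertex is precisely the estimate $\delta v^\delta\geq-\mu-o(1)$ that the lemma is meant to establish: the argument is circular there, and ``handling an interior maximum'' does not help, since nothing prevents the maximum of $\hat w^{\delta,R}-v^\delta$ from sitting on the boundary near $z_\delta$.

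The paper closes this loop by a contradiction argument in which the comparison domain is not a fixed ball but a sublevel set of the perturbed solution itself. Assuming $\mu+\delta v^\delta(0,p)=-\theta<0$, one adds $c\theta\left(\left(1+|y|^2\right)^{1/2}-1\right)$ to $v^\delta(\cdot,p)-v^\delta(0,p)$ to obtain a function $w$ which is a strict supersolution at level $\mu+\theta/2$ on $U:=\{w\leq \theta/(4\delta)\}\subseteq B_{s/\delta}$; on $\partial U$ the value of $w$ is known exactly, namely $\theta/(4\delta)$, so no a priori information on $v^\delta$ at the boundary is required. Comparing $w$ with $\tilde m(y)=-m_{\mu,\sigma}(y,se/\delta)-p\cdot y$ (vertex placed outside $U$) and carrying the boundary term yields~\eqref{e.supcont}, i.e.\ a deviation of order $\theta$ between $\delta m_{\mu,\sigma}$ and $\overline m_{\mu,\sigma}$ on a macroscopic ball, which contradicts Proposition~\ref{p.shape} once $\delta$ is small. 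To salvage your version you would need to import this idea (or an equivalent mechanism for dispensing with boundary data); as written, the proposal does not prove the lemma.
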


\begin{lemma}
\label{l.above-hilltop}
\begin{equation*} \label{}
\P \bigg[ \forall p\in B_1, \ \limsup_{\delta \to 0} \, -\delta v^\delta (0,p) \leq \mu_*   \bigg]  = 1.
\end{equation*}
\end{lemma}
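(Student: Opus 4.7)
The plan is to prove the claim by constructing, for each $p\in B_1$ and small $\delta>0$, a subsolution $\Phi_\delta$ of~\eqref{e.appcell} satisfying $\Phi_\delta(0)\geq -\mu_*/\delta - o(1/\delta)$; comparison with $v^\delta$ then yields $v^\delta(0,p)\geq -\mu_*/\delta - o(1/\delta)$, which gives the claim upon sending $\delta\to 0$. The construction exploits the \emph{global} supersolution property of $m_{\mu_*,\sigma_*}$ on all of $\Rd$ (Proposition~\ref{p.mmurho}~\eqref{e.super.global}), which is available only because $\sigma_*\leq 0$; this is the feature that distinguishes the hilltop case from the argument for Lemma~\ref{l.above}.

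I would split into Case~1 ($\kappa\geq 0$, $(\mu_*,\sigma_*)=(\kappa,-1)$) and Case~2 ($\kappa<0$, $\mu_*=0$), and treat Case~1 in detail. For $z\in\Rd$, set $m(\cdot) := m_{\mu_*,-1}(\cdot,z)$; by Lemma~\ref{l.easybnds} it is Lipschitz with constant $L=(1-\sqrt{\mu_*})^{1/2}$, and by Proposition~\ref{p.mmurho} it is a solution of the sub-equation~\eqref{e.subpde} on $\Rd\setminus\{z\}$. The identity $\sigma_*^2(\mu_*+\overline v)=1$ then yields $(|Dm|^2-1)^2 - V(y) = \mu_*$ a.e.\ away from $z$. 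I take the barrier
\[
\Phi_\delta(y) \;:=\; -\frac{\mu_*}{\delta} \;-\; p\cdot y \;-\; m(y,z_\delta) \;+\; m(0,z_\delta),
\]
so that $\Phi_\delta(0)=-\mu_*/\delta$ and $p+D\Phi_\delta = -Dm$. A key viscosity observation is that any smooth $\tilde\psi$ touching $m$ from below at $y_0$ must satisfy $|D\tilde\psi(y_0)|\leq L$, so that $|p+D\psi|^2 \leq 1-\sqrt{\mu_*}$ which rules out the ``bad branch'' $|p+D\psi|^2 > 1+\sqrt{\mu_*+V}$. Combined with the supersolution bound $|D\tilde\psi(y_0)|^2 \geq 1-\sqrt{\mu_*+V(y_0)}$ from~\eqref{e.supersub}, this pins down $(|p+D\psi|^2-1)^2 - V(y_0) \leq \mu_*$, and the viscosity subsolution condition at $y_0\neq z_\delta$ reduces to the scalar inequality $m(y_0,z_\delta) + p\cdot y_0 \geq m(0,z_\delta)$.

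The anchor $z_\delta$ is then chosen by ergodicity. Since $p\in K_1$ implies $p\in\partial\overline m_{\mu_*,-1}(0)$ (via Lemma~\ref{l.contmono} applied to the defining intersection of $K_1$), for each $R>0$ one seeks ``compatible'' anchors of the form $z=R\hat e$ with $\hat e$ chosen so that $\overline m_{\mu_*,-1}(-\hat e)\approx p\cdot\hat e$; Proposition~\ref{p.shape} and a Borel--Cantelli-type selection on level sets of $V$ yield $z_\delta$ with $|z_\delta|=R_\delta$, $V(z_\delta)\to\overline v$, and $\delta R_\delta\to 0$, such that the scalar inequality holds on a ball $B_{r_\delta}$ around the origin (with $r_\delta = R_\delta^\alpha$ for some $\alpha<1$) up to sublinear error. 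Placing $|z_\delta|$ outside $B_{r_\delta}$ keeps the cusp at $z_\delta$ outside the comparison domain; the boundary inequality $\Phi_\delta \leq v^\delta$ on $\partial B_{r_\delta}$ follows from the bounds~\eqref{e.dvdbnd} and the Lipschitz estimate~\eqref{e.dvdLip}, and standard comparison yields $v^\delta(0,p)\geq -\mu_*/\delta - o(1/\delta)$. Case~2 is handled analogously with $\sigma_* = -\overline v^{-1/2}$, where the target bound $-\delta v^\delta\leq 0$ makes the corresponding scalar inequality easier.

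The main obstacle is the selection of the anchor $z_\delta$ when $p$ is deep inside $K_1$: the scalar inequality $m(y,z_\delta)+p\cdot y \geq m(0,z_\delta)$ is asymptotically equivalent, in the limit shape, to an equation of the form $p = -\nabla\overline m_{\mu_*,-1}(-\hat z_\delta)$, which for $p$ in the interior of $\partial\overline m_{\mu_*,-1}(0)$ may have no solution in the smooth sense. Overcoming this requires leveraging the full subadditive-ergodic structure to control sub-leading errors in Proposition~\ref{p.shape}, and the fact that $p\in K_1$ lies in the \emph{common} subdifferential of $\overline m_{\mu,\sigma}$ for all $(\mu,\sigma)\in\A'\setminus\{(\mu_*,\sigma_*)\}$, which supplies the rigidity needed for a joint choice of radius, direction, and saturation level of $V(z_\delta)$.
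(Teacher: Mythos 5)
Your plan has two genuine gaps, and both are at the heart of the matter. First, the scalar inequality $m_{\mu_*,-1}(y,z_\delta)+p\cdot y\ge m_{\mu_*,-1}(0,z_\delta)$ that your barrier $\Phi_\delta$ needs on the whole comparison domain requires $p$ to be a subgradient of the \emph{critical} limit shape, and you justify $K_1\subseteq\partial\overline m_{\mu_*,-1}(0)$ by appealing to Lemma~\ref{l.contmono}; but the continuity statement~\eqref{e.cont} is proved only at parameters with $\sigma(\mu+\overline v)^{1/2}>-1$ and says nothing at the endpoint $(\mu_*,\sigma_*)$, where the right-hand side of the sub-equation degenerates. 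The paper never establishes (nor needs) such continuity, and the flat hilltop exists precisely because the shapes $\overline m_{\mu,-1}$, $\mu<\mu_*$, need not collapse to $\overline m_{\mu_*,-1}$. Worse, the lemma is asserted for \emph{every} $p\in B_1$, and for general such $p$ the inequality $\overline m_{\mu_*,-1}(y)\ge p\cdot y$ is false in general: at $(\mu_*,\sigma_*)$ the lower bound in~\eqref{e.mmuyz.-1} is $\bigl(1+\sigma_*(\mu_*+\overline v)^{1/2}\bigr)^{1/2}|y-z|=0$, so $\partial\overline m_{\mu_*,-1}(0)$ may be far smaller than $B_1$. Even granting the subgradient relation, you need it at finite scales on balls of radius $r_\delta\to\infty$ with errors $o(r_\delta)$ tied to $\delta$, and Proposition~\ref{p.shape} carries no rate; your closing paragraph concedes this anchor-selection problem, and it is not solvable with the tools available in the paper.

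Second, the comparison step as you set it up is circular. With $r_\delta=o(1/\delta)$ you have $\Phi_\delta=-\mu_*/\delta+O(r_\delta)$ on $\partial B_{r_\delta}$, so the boundary inequality $\Phi_\delta\le v^\delta$ already demands $v^\delta\ge-\mu_*/\delta-o(1/\delta)$ there, which is essentially the estimate being proved; the a priori bound~\eqref{e.dvdbnd} gives only $v^\delta\ge-\delta^{-1}(|p|^2-1)^2$, strictly weaker for $p$ in the hilltop (e.g.\ $p=0$), and~\eqref{e.dvdLip} cannot upgrade it, while your barrier has slope comparable to that of $v^\delta$ and no strictness to absorb the discrepancy. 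The paper's proof avoids both problems: it argues by contradiction from $\delta v^\delta(0,p)\le-\mu_*-\theta$, perturbs by $c\theta\bigl((1+|y|^2)^{1/2}-1\bigr)$ so that the comparison takes place on the sublevel set $U=\{w\le\theta/(4\delta)\}$ whose boundary values are pinned by construction, and, crucially, uses not $m_{\mu_*,\sigma_*}$ but the plain unit cone $\tilde m(y)=-|y-y_\theta|-p\cdot y$ ($=-m_{0,0}(y,y_\theta)-p\cdot y$) centered at a $\delta$-independent point with $V(y_\theta)\ge\overline v-\eta$: since $|p|\le1$, every test gradient at the vertex satisfies $|p+D\phi(y_\theta)|\le1$, hence $\bigl(|p+D\phi(y_\theta)|^2-1\bigr)^2\le1\le\mu_*+V(y_\theta)+\tfrac14\theta$, making $\tilde m$ a global subsolution, and the contradiction comes from $p\cdot(-y)-|y|\le0$. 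No ergodic input beyond the determinism of $\sup_{\Rd}V$ (Lemma~\ref{l.detm}) is used. Note also that your guiding heuristic is misplaced: the global supersolution property~\eqref{e.super.global} for $\sigma\le0$ powers the lower bound in Lemma~\ref{l.below.-1}, not this upper bound.
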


Postponing the proof of the lemmas, we show first that they imply Proposition~\ref{p.cell}.

\begin{proof}[{\bf Proof of Proposition~\ref{p.cell}}]
According to~\eqref{e.dvdcontp}, using also Lemma~\ref{l.detm} to control the constant in~\eqref{e.dvdcontp} on an event of full probability, it suffices to prove that 
\begin{equation} \label{e.cell2}
\P \left[ \forall p\in \Rd,\ \forall R>0, \ \limsup_{\delta \to 0} \sup_{B_{R/\delta}} \left| \delta v^\delta(\cdot,p) + \overline H(p) \right| =0 \right] = 1. 
\end{equation}
By~\cite[Lemma~5.1]{AS2}, to obtain~\eqref{e.cell2}, it suffices to show that 
\begin{equation} \label{e.cell0}
\P \left[ \forall p\in\Rd, \ \limsup_{\delta \to 0} \left| \delta v^\delta(0,p) + \overline H(p) \right| =0 \right] = 1. 
\end{equation}
Indeed, while the Hamiltonian in~\cite{AS2} is assumed to be convex in~$p$, the argument for~\cite[Lemma 5.1]{AS2} relies only on a $\P$-almost sure, uniform Lipschitz bound on $v^\delta(\cdot,p)$ (which we have in~\eqref{e.dvdLip}, using again Lemma~\ref{l.detm} to control the constant), and therefore the lemma holds in our situation notwithstanding the lack of convexity. 

\smallskip

To obtain~\eqref{e.cell0}, we consider the partition $\{ K_1,K_2,K_3,K_4\}$ of $\Rd$ given by~\eqref{e.Ki} and Proposition~\ref{p.partition} and check that, for each $i \in \{ 1,2,3,4\}$,
\begin{equation} \label{e.check.Ki}
\P \left[ \forall p\in K_i, \ \limsup_{\delta \to 0} \left| \delta v^\delta(0,p) + \overline H(p) \right| =0 \right] = 1. 
\end{equation}
In view of the formula~\eqref{e.Hbargoodform}, we see that:
\begin{itemize}

\item For~$i=1$, we consider two cases. If $\kappa \leq 0$, then $\mu_* = 0$ and, in view of the fact that $K_1 \subseteq B_1$, we obtain~\eqref{e.check.Ki} for $i=1$ from Lemmas~\ref{l.below-valley} and~\ref{l.above-hilltop}. If $\kappa > 0$, then~$\mu_*=\kappa>0$ and $\sigma_*=-1$ and we have $(\mu,-1) \in \A$ for all $0\leq \mu < \mu_*$, and thus~\eqref{e.check.Ki} for $i=1$ follows from~Lemmas~\ref{l.below.-1} and~\ref{l.above-hilltop}.

\item For $i=2$,~\eqref{e.check.Ki} is immediate from Lemmas~\ref{l.below.-1} and~\ref{l.above}.

\item For $i=3$, we get~\eqref{e.check.Ki} immediately from~Lemmas~\ref{l.below-valley} and~\ref{l.above}.

\item For $i=4$, the claim~\eqref{e.check.Ki} is a consequence of Lemmas~\ref{l.below.+1} and~\ref{l.above}.

\end{itemize}
This completes the argument. 
\end{proof}

We obtain each of the five auxiliary lemmas stated above by a comparison between the functions~$m_{\mu,\sigma}$ and~$v^\delta$, with the exception of Lemma~\ref{l.below-valley}, which is much simpler.

\begin{proof}[{\bf Proof of Lemma~\ref{l.below.+1}}]
Fix $(\mu,1) \in \mathcal A$ and $p\in \partial \overline m_{\mu,1}(\partial B_1)$. Select $e\in \partial B_1$ such that $p\in \partial \overline m_{\mu,1}(e)$. This implies that, for every $y\in \Rd$,
\begin{equation} \label{e.ptouchm}
\overline m_{\mu,1}(e) - p\cdot e = 0 \leq \overline m_{\mu,1}(y) - p\cdot y.
\end{equation}
Suppose that $V\in \Omega$ and $\delta >0$ are such that 
\begin{equation} \label{e.upass}
\theta:= \mu + \delta v^\delta (0,p) > 0.
\end{equation}
If $c>0$ is sufficiently small, then the function
\begin{equation*} \label{}
w(y) := v^\delta(y,p)-v^\delta(0,p) - c\theta \left(\left(1+|y|^2\right)^{1/2}-1\right)
\end{equation*}
satisfies
\begin{equation} \label{e.selc}
\left(\left| p+Dw \right|^2-1\right)^2-V(y) \leq -\delta v^\delta(y,p) + \frac14 \theta \quad \text{in} \ \Rd.
\end{equation}
Due to~\eqref{e.dvdbnd}, there exists $s>0$, independent of $\delta$, such that
\begin{equation} \label{e.Udeltout}
U := \left \{y\in \Rd\,:\, w(y) \geq -\frac{\theta}{4\delta} \right\} \subseteq B_{s/\delta}
\end{equation}
and specializing~\eqref{e.selc} to the domain $U$ yields, in view of the definition of $\theta$, that
\begin{equation}\label{e.wsub}
\left(\left| p+Dw \right|^2-1\right)^2-V(y) \leq \mu-\frac{\theta}{2} \quad \text{in} \ U.
\end{equation}
We observe next that, due to~\eqref{e.super} and~\eqref{e.Udeltout}, the function 
\begin{equation*} \label{}
\tilde m(y):= m_{\mu,1}\left(y, -se/\delta \right) - p\cdot y
\end{equation*}
is a supersolution of the equation
\begin{equation}\label{e.tildmsup}
\left(\left|p+D\tilde m\right|^2-1\right)^2-V(y) \geq \mu \quad \text{in} \ U.
\end{equation}
In view of $0\in U$,~\eqref{e.Udeltout},~\eqref{e.wsub} and~\eqref{e.tildmsup}, the comparison principle yields
\begin{equation*}
-\tilde m(0) = w(0)-\tilde m(0) \leq \max_{\partial U} \left ( w-\tilde m \right) = - \frac{\theta}{4\delta} + \max_{\partial U} \left (-\tilde m \right). 
\end{equation*}
Rearranging the previous inequality and using~\eqref{e.Udeltout}, we find that
\begin{equation*}
\sup_{y\in B_s} \left(p\cdot y + \delta m_{\mu,1}\left(0,\frac{-se}{\delta}\right)-\delta m_{\mu,1}\left(\frac{y}{\delta},\frac{-se}{\delta}\right)\right) \geq \frac{1}{4}\theta.
\end{equation*}
Notice that~\eqref{e.ptouchm} and the positive homogeneity of $\overline m_{\mu,1}$ implies that
\begin{equation*} \label{}
p\cdot y \leq \overline m_{\mu,1}(y+se) - \overline m_{\mu,1}(se).
\end{equation*}
Combining the previous two lines, we obtain
\begin{multline} \label{e.bgreps}
\sup_{y\in B_s} \left( - \overline m_{\mu,1}(se) + \delta m_{\mu,1}\left(0,\frac{-se}{\delta}\right)\right)\\ +\sup_{y\in B_s} \left(\overline m_{\mu,1}(y+se) -\delta m_{\mu,1}\left(\frac{y}{\delta},\frac{-se}{\delta}\right)\right) \geq \frac{1}{4}\theta.
\end{multline}
We have shown that~\eqref{e.upass} implies~\eqref{e.bgreps}. We therefore obtain the conclusion of the lemma by applying Proposition~\ref{p.shape}.
\end{proof}

\begin{proof}[{\bf Proof of Lemma~\ref{l.below.-1}}]
The proof is similar to Lemma~\ref{l.below.+1}. The difference is that we use~\eqref{e.super.global} rather than~\eqref{e.super}, which means that we do not have to take the vertex of $m_{\mu,\sigma}$ to be far away from the origin in the definition of the function $\tilde m$. The argument is therefore easier and the statement of the lemma is stronger. 

\smallskip

Fix $(\mu,-1) \in \mathcal A$ and $p\in \partial \overline m_{\mu,-1}(0)$. This implies that, for every $y\in \Rd$,
\begin{equation} \label{e.ptouchm.-1}
\overline m_{\mu,-1}(y) \geq p\cdot y.
\end{equation}
Suppose that $V\in \Omega$ and $\delta >0$ are such that 
\begin{equation} \label{e.upass.-1}
\theta:= \mu + \delta v^\delta (0,p) > 0.
\end{equation}
If $c>0$ is sufficiently small, then the function
\begin{equation*} \label{}
w(y) := v^\delta(y,p)-v^\delta(0,p) - c\theta \left(\left(1+|y|^2\right)^{1/2}-1\right)
\end{equation*}
satisfies
\begin{equation} \label{e.selc.-1}
\left(\left| p+Dw \right|^2-1\right)^2-V(y) \leq -\delta v^\delta(y,p) - \frac14 \theta \quad \text{in} \ \Rd.
\end{equation}
Due to~\eqref{e.dvdbnd}, there exists $s>0$, independent of $\delta$, such that
\begin{equation} \label{e.Udeltout.-1}
U := \left \{y\in \Rd\,:\, w(y) \geq -\frac{\theta}{4\delta} \right\} \subseteq B_{s/\delta}
\end{equation}
and restricting~\eqref{e.selc.-1} to the domain $U$ we obtain, in view of the definition of $\theta$, that
\begin{equation}\label{e.wsub.-1}
\left(\left| p+Dw \right|^2-1\right)^2-V(y) \leq \mu-\frac{\theta}{2} \quad \text{in} \ U.
\end{equation}
According to~\eqref{e.super}, the function 
\begin{equation*} \label{}
\tilde m(y):= m_{\mu,-1}\left(y, 0 \right) - p\cdot y
\end{equation*}
is a supersolution of the equation
\begin{equation}\label{e.tildmsup.-1}
\left(\left|p+D\tilde m\right|^2-1\right)^2-V(y) \geq \mu \quad \text{in} \ \Rd.
\end{equation}
In view of $0\in U$,~\eqref{e.Udeltout.-1},~\eqref{e.wsub.-1} and~\eqref{e.tildmsup.-1}, the comparison principle yields
\begin{equation*}
0= w(0)-\tilde m(0) \leq \max_{\partial U} \left ( w-\tilde m \right) = - \frac{\theta}{4\delta} + \max_{\partial U} \left (-\tilde m \right). 
\end{equation*}
Rearranging the previous inequality and using~\eqref{e.Udeltout.-1}, we find that
\begin{equation*}
\sup_{y\in B_s} \left(p\cdot y -\delta m_{\mu,-1}\left(\frac{y}{\delta},0\right)\right) \geq \frac{1}{4}\theta.
\end{equation*}
Using~\eqref{e.ptouchm.-1}, we get
\begin{equation} \label{e.bgreps.-1}
\sup_{y\in B_s} \left( \overline m_{\mu,-1}(y) -\delta m_{\mu,-1}\left(\frac{y}{\delta},0\right) \right) \geq \frac{1}{4}\theta.
\end{equation}
We have shown that~\eqref{e.upass.-1} implies~\eqref{e.bgreps.-1}. We therefore obtain the conclusion of the lemma from Proposition~\ref{p.shape}.
\end{proof}

\begin{proof}[{\bf Proof of Lemma~\ref{l.below-valley}}]
Fix $p\in \Rd$, $V\in \Omega$ for which $\inf_{\Rd} V = 0$ and $\sup_{\Rd} V = \overline v$ and let $\theta > 0$. Select $y_\theta\in \Rd$ and a number $r>0$ such that $\sup_{B_r(y_\theta)} V \leq \theta$. Let $\varphi$ be any smooth function on $B_r(y_\theta)$ such that $\varphi(x)\rightarrow +\infty$ as $x \to \partial B_r(y_\theta)$. Then $v^\delta(\cdot,p) - \varphi$ attains a local maximum at some point $y \in B_r(y_\theta)$. The equation~\eqref{e.appcell} then gives
\begin{equation}\label{e.belowthe}
\delta v^\delta(y,p)\leq \delta v^\delta(y,p) + \left(\left| p+D\varphi(y) \right|^2 -1 \right)^2 \leq V(y) \leq \theta.
\end{equation}
Letting $r\to 0$, we obtain that 
\begin{equation*} \label{}
\delta v^\delta (y_\theta,p) \leq \theta. 
\end{equation*}
In view of~\eqref{e.dvdLip}, we have 
\begin{equation*}
-\delta v^\delta(0,p) \geq -\theta -\delta \left|v^\delta(y_\theta,p)-v^\delta(0,p)\right|\geq  -\theta - C\delta |y_\theta|
\end{equation*}
where $C>0$ depends only on an upper bound for $|p|$ and $\overline v$. Sending first $\delta \to 0$ and then $\theta \to 0$ yields
\begin{equation} \label{e.belowzero}
\liminf_{\delta \to 0} -\delta v^\delta(0,p) \geq 0. 
\end{equation}
We have shown that $\inf_{\Rd} V = 0$ and $\sup_{\Rd} V =\overline v$ imply~\eqref{e.belowzero} for all $p\in \Rd$. We therefore obtain the conclusion of the lemma by an appeal to~Lemma~\ref{l.detm}. 
\end{proof}

\begin{proof}[{\bf Proof of Lemma~\ref{l.above}}]
The argument is similar to that of Lemma~\ref{l.below.+1}. We fix $(\mu,\sigma) \in \mathcal A$ and $p\in \partial \overline m_\mu(\partial B_1)$. Select $e\in \partial B_1$ such that $p \in \partial \overline m_{\mu,\sigma}(e)$. Since $\overline m_{\mu,\sigma}$ is positively homogeneous, this means that, for every $y\in \Rd$,
\begin{equation} \label{e.cripp}
\overline m_{\mu,\sigma}(e) - p\cdot e = 0 \leq \overline m_{\mu,\sigma}(y) - p\cdot y.
\end{equation}

\smallskip

We suppose that for fixed $V\in \Omega$ and $\delta>0$ we have
\begin{equation}\label{e.downass}
-\theta:=\mu+\delta v^\delta(0,p) <0.
\end{equation}
We define
$$
w(y):=v^\delta(y,p)-v^\delta(0,p)+c\theta\left( \left(1+|y|^2\right)^{1/2}-1\right),
$$
and notice that, for $c>0$ sufficiently small, $w$ satisfies
\begin{equation}\label{e.supc}
\left(|p+Dw|^2-1\right)^2-V(y) \geq -\delta v^\delta(y,p)-\frac14 \theta \quad \text{in}\ \Rd.
\end{equation}
By \eqref{e.dvdbnd}, there exists $s>0$, which independent of $\delta$, such that
\begin{equation}\label{e.Udoagain}
U:=\left\{y\in \Rd\,:\,w(y) \leq \frac{\theta}{4\delta}\right\} \subseteq B_{s/\delta}.
\end{equation}
In view of~\eqref{e.downass},~\eqref{e.supc} and~\eqref{e.Udoagain}, we have
\begin{equation}\label{e.wsup}
\left(|p+Dw|^2-1\right)^2-V(y) \geq \mu+\frac{\theta}{2} \quad \text{in}\ U.
\end{equation}
We next employ \eqref{e.sub}, \eqref{e.Udoagain}, and the fact that $\sigma^2 \leq 1$ to deduce that the function
$$
\tilde m(y):=-m_{\mu,\sigma}(y,se/\delta)-p\cdot y
$$
is a subsolution of the equation
\begin{equation}\label{e.mtildesub}
\left(|p+D\tilde m|^2-1\right)^2-V(y) \leq \mu \quad \text{in}\ U.
\end{equation}
The usual comparison hence implies
$$
\tilde m(0)=\tilde m(0)-w(0) \leq \max_{\partial U}(\tilde m - w)
$$
Rearranging the above and using \eqref{e.Udoagain} to achieve that
\begin{equation}\label{e.above.re}
\sup_{y\in B_s}\left(-p\cdot y - \delta m_{\mu,\sigma}\left(\frac{y}{\delta},\frac{se}{\delta}\right)+\delta m_{\mu,\sigma}\left(0,\frac{se}{\delta}\right)\right) \geq \frac14 \theta.
\end{equation}
By the symmetric property~\eqref{e.mmusym} of $m_{\mu,\sigma}$ and~\eqref{e.cripp}, we get
\begin{equation}\label{e.supcont}
\sup_{y\in B_s}\left(- \left(\delta m_{\mu,\sigma}\left(\frac{se}{\delta},\frac{y}{\delta}\right)-\overline m_{\mu,\sigma}(se-y)\right)
+\left( \delta m_{\mu,\sigma}\left(\frac{se}{\delta},0\right)-\overline m_{\mu,\sigma}(se)\right) \right) \geq \frac14 \theta.
\end{equation}
We have shown that~\eqref{e.downass} implies~\eqref{e.supcont}. The conclusion of the lemma therefore follows from~Proposition~\ref{p.shape}.
\end{proof}

\begin{proof}[{\bf Proof of Lemma~\ref{l.above-hilltop}}]

Fix $p\in \overline B_1$ and $V\in \Omega$ for which $\sup_{\Rd} V = \overline v$. Suppose $\theta,\delta > 0$ are such that 
\begin{equation}\label{e.downhillass}
\delta v^\delta(0,p) \leq -\mu_* - \theta.
\end{equation}
Define
\begin{equation*} \label{}
w(y):=v^\delta(y,p)-v^\delta(0,p)+c\theta\left( \left(1+|y|^2\right)^{1/2}-1\right),
\end{equation*}
and check that, if $c>0$ is sufficiently small, then $w$ satisfies
\begin{equation}\label{e.hillsupc}
\left(\left|p+Dw\right|^2-1\right)^2-V(y) \geq -\delta v^\delta(y,p)-\frac14 \theta \quad \text{in}\ \Rd.
\end{equation}
By \eqref{e.dvdbnd}, there exists $s>0$, which is independent of $\delta$, such that
\begin{equation}\label{e.Uinagain}
U:=\left\{y\in \Rd\,:\,w(y) \leq \frac{\theta}{4\delta}\right\} \subseteq B_{s/\delta}.
\end{equation}
In view of~\eqref{e.downhillass},~\eqref{e.hillsupc} and~\eqref{e.Uinagain}, we have
\begin{equation}\label{e.hillwsup}
\left(|p+Dw|^2-1\right)^2-V(y) \geq \mu_*+\frac{1}{2}\theta   \quad \text{in}\ U.
\end{equation}
Set $\eta:= \frac14 \min\{\theta,1\}$, select $y_\theta \in \Rd$ such that 
\begin{equation} \label{e.ytheta}
V(y_\theta) \geq \overline v - \eta = 1 - \kappa - \eta.
\end{equation}
According to~\eqref{e.mmuyz.-1} and~\eqref{e.mmuyz.+1}, $m_{\mu,0}(y,z) = |y-z|$ for every $\mu\geq 0$ and $y,z\in \Rd$. Define
\begin{equation*} \label{}
\tilde m(y):=-m_{0,0}(y,y_\theta)-p\cdot y=-|y-y_\theta|-p\cdot y.
\end{equation*}
We claim that  $\tilde m$ is a subsolution of the equation
\begin{equation}\label{e.hillmsub}
\left(|p+D\tilde m|^2-1\right)^2-V(y) \leq \mu_*+\frac{1}{4}\theta \quad \text{in}\ \Rd.
\end{equation}
In view of \eqref{e.sub}, it suffices to check~\eqref{e.hillmsub} at the vertex point $y_\theta$. We consider a smooth test function $\phi$ such that
\begin{equation*} \label{}
y\mapsto \tilde m(y) - \phi(y) = -|y-y_\theta|- \left( \phi(y) + p\cdot y  \right) \ \ \mbox{has a local maximum at} \ y = y_\theta.
\end{equation*}
It is evident that
\begin{equation*} \label{}
|p+D\phi(y_\theta)|^2  \leq 1.
\end{equation*}
Thus $\left( |p+D\phi(y_\theta)|^2  - 1\right)^2\leq 1$ and so we deduce from~\eqref{e.ytheta} that 
\begin{equation*} \label{}
\left( |p+D\phi(y_\theta)|^2  - 1\right)^2 - V(y_\theta) \leq 1 - V(y_\theta) \leq \kappa + \eta \leq \mu_* + \frac14\theta. 
\end{equation*}
This completes the proof of~\eqref{e.hillmsub}.

\smallskip

Applying the comparison principle, in view of $0\in U$,~ \eqref{e.hillwsup} and~\eqref{e.hillmsub}, we obtain
\begin{equation*} \label{}
\tilde m(0)=\tilde m(0)-w(0) \leq \max_{\partial U}(\tilde m - w)
\end{equation*}
Rearranging the above expression and using \eqref{e.Uinagain}, we deduce that
\begin{equation*}
\sup_{y\in B_s}\left(-p\cdot y - \delta \left|\frac{y}{\delta}-y_\theta\right|+\delta |y_\theta|\right) \geq \frac14 \theta.
\end{equation*}
By the usual triangle inequality, this yields
\begin{equation} \label{e.hillsupcont}
\sup_{y\in B_s} \left(p\cdot (-y) -|y|\right) \geq \frac14\theta  - 2\delta |y_\theta|.
\end{equation}
We have shown that~\eqref{e.downhillass} and $\sup_{\Rd} V = \overline v$ implies~\eqref{e.hillsupcont}. As $p\in \overline B_1$,~\eqref{e.hillsupcont} is impossible for $\delta < \theta/(8|y_\theta|)$.

\smallskip

We have shown that $\sup_{\Rd} V = \overline v$ implies that, for every $p\in\overline B_1$ and $\theta > 0$,
\begin{equation*} \label{}
\delta v^\delta(0,p) \geq -\mu_* - \theta \quad \mbox{for all} \quad 0<\delta < \theta/(8|y_\theta|).
\end{equation*}
Thus $\sup_{\Rd} V = \overline v$ implies
\begin{equation*} \label{}
\limsup_{\delta \to 0} \sup_{|p|\leq 1} - \delta v^\delta(0,p) \leq \mu_*.
\end{equation*}
We therefore obtain the statement of the lemma after an appeal to Lemma~\ref{l.detm}. 
\end{proof}

\noindent{\bf Acknowledgements.}
The third author was partially supported by NSF CAREER award \#1151919.

\bibliographystyle{plain}
\bibliography{nonconvex}

\begin{thebibliography}{10}

\bibitem{ACS}
S.~N. Armstrong, P.~Cardaliaguet, and P.~E. Souganidis.
\newblock {Error estimates and convergence rates for the stochastic
  homogenization of Hamilton-Jacobi equations}.
\newblock {\em J. Amer. Math. Soc.}, in press.

\bibitem{AS1}
S.~N. Armstrong and P.~E. Souganidis.
\newblock Stochastic homogenization of {H}amilton-{J}acobi and degenerate
  {B}ellman equations in unbounded environments.
\newblock {\em J. Math. Pures Appl. (9)}, 97(5):460--504, 2012.

\bibitem{AS2}
S.~N. Armstrong and P.~E. Souganidis.
\newblock Stochastic homogenization of level-set convex {H}amilton-{J}acobi
  equations.
\newblock {\em Int. Math. Res. Not.}, 2013(15):3420--3449, 2013.

\bibitem{AT1}
S.~N. Armstrong and H.~V. Tran.
\newblock {Stochastic homogenization of viscous Hamilton-Jacobi equations and
  applications}, preprint, arXiv:1310.1749 [math.AP].

\bibitem{BJ}
E.~N. Barron and R.~Jensen.
\newblock Semicontinuous viscosity solutions for {H}amilton-{J}acobi equations
  with convex {H}amiltonians.
\newblock {\em Comm. Partial Differential Equations}, 15(12):1713--1742, 1990.

\bibitem{CST}
A.~Ciomaga, P.~E. Souganidis, and H.~V. Tran.
\newblock {Stochastic homogenization of interfaces with changing sign
  velocity}, in preparation.

\bibitem{DS1}
A.~Davini and A.~Siconolfi.
\newblock Metric techniques for convex stationary ergodic {H}amiltonians.
\newblock {\em Calc. Var. Partial Differential Equations}, 40(3-4):391--421,
  2011.

\bibitem{DS2}
A.~Davini and A.~Siconolfi.
\newblock {Weak KAM Theory topics in the stationary ergodic setting}.
\newblock {\em Calc. Var. Partial Differential Equations}, in press.

\bibitem{E2}
L.~C. Evans.
\newblock Periodic homogenisation of certain fully nonlinear partial
  differential equations.
\newblock {\em Proc. Roy. Soc. Edinburgh Sect. A}, 120(3-4):245--265, 1992.

\bibitem{EBook}
L.~C. Evans.
\newblock {\em Partial differential equations}, volume~19 of {\em Graduate
  Studies in Mathematics}.
\newblock American Mathematical Society, Providence, RI, 1998.

\bibitem{Kosy}
E.~Kosygina.
\newblock Homogenization of stochastic {H}amilton-{J}acobi equations: brief
  review of methods and applications.
\newblock In {\em Stochastic analysis and partial differential equations},
  volume 429 of {\em Contemp. Math.}, pages 189--204. Amer. Math. Soc.,
  Providence, RI, 2007.

\bibitem{KRV}
E.~Kosygina, F.~Rezakhanlou, and S.~R.~S. Varadhan.
\newblock Stochastic homogenization of {H}amilton-{J}acobi-{B}ellman equations.
\newblock {\em Comm. Pure Appl. Math.}, 59(10):1489--1521, 2006.

\bibitem{KV}
E.~Kosygina and S.~R.~S. Varadhan.
\newblock Homogenization of {H}amilton-{J}acobi-{B}ellman equations with
  respect to time-space shifts in a stationary ergodic medium.
\newblock {\em Comm. Pure Appl. Math.}, 61(6):816--847, 2008.

\bibitem{LPV}
P.-L. Lions, G.~C. Papanicolaou, and S.R.S. Varadhan.
\newblock Homogenization of {H}amilton-{J}acobi equations.
\newblock {U}npublished preprint, 1987.

\bibitem{LS1}
P.-L. Lions and P.~E. Souganidis.
\newblock Correctors for the homogenization of {H}amilton-{J}acobi equations in
  the stationary ergodic setting.
\newblock {\em Comm. Pure Appl. Math.}, 56(10):1501--1524, 2003.

\bibitem{LS2}
P.-L. Lions and P.~E. Souganidis.
\newblock Homogenization of ``viscous'' {H}amilton-{J}acobi equations in
  stationary ergodic media.
\newblock {\em Comm. Partial Differential Equations}, 30(1-3):335--375, 2005.

\bibitem{LS3}
P.-L. Lions and P.~E. Souganidis.
\newblock Stochastic homogenization of {H}amilton-{J}acobi and
  ``viscous''-{H}amilton-{J}acobi equations with convex
  nonlinearities---revisited.
\newblock {\em Commun. Math. Sci.}, 8(2):627--637, 2010.

\bibitem{LYZ}
S.~Luo, Y.~Yu, and H.~Zhao.
\newblock A new approximation for effective {H}amiltonians for homogenization
  of a class of {H}amilton-{J}acobi equations.
\newblock {\em Multiscale Model. Simul.}, 9(2):711--734, 2011.

\bibitem{MN}
I.~Matic and J.~Nolen.
\newblock {A sublinear variance bound for solutions of a random
  Hamilton--Jacobi equation}.
\newblock {\em Journal of Statistical Physics}, 149(2):342--361, 2012.

\bibitem{OTV}
A.~M. Oberman, R.~Takei, and A.~Vladimirsky.
\newblock Homogenization of metric {H}amilton-{J}acobi equations.
\newblock {\em Multiscale Model. Simul.}, 8(1):269--295, 2009.

\bibitem{RT}
F.~Rezakhanlou and J.~E. Tarver.
\newblock Homogenization for stochastic {H}amilton-{J}acobi equations.
\newblock {\em Arch. Ration. Mech. Anal.}, 151(4):277--309, 2000.

\bibitem{Sch}
R.~W. Schwab.
\newblock Stochastic homogenization of {H}amilton-{J}acobi equations in
  stationary ergodic spatio-temporal media.
\newblock {\em Indiana Univ. Math. J.}, 58(2):537--581, 2009.

\bibitem{S}
P.~E. Souganidis.
\newblock Stochastic homogenization of {H}amilton-{J}acobi equations and some
  applications.
\newblock {\em Asymptot. Anal.}, 20(1):1--11, 1999.

\end{thebibliography}

\end{document}